\newcommand{\mz}{\ensuremath{\mathbb Z}}
\newcommand{\mr}{\ensuremath{\mathbb R}}
\newcommand{\mq}{\ensuremath{\mathbb Q}}
\newcommand{\mymod}{\ensuremath{\negthickspace \negmedspace \pmod}}
\newcommand{\shortmod}{\ensuremath{\negthickspace \negthickspace \negthickspace \pmod}}
\newcommand{\half}{\ensuremath{ \frac{1}{2}}}
\newcommand{\intR}{\int_{-\infty}^{\infty}}
\newcommand{\sumstar}{\sideset{}{^*}\sum}
\newcommand{\sumflat}{\sideset{}{^\flat}\sum}
\newcommand{\sumprime}{\sideset{}{'}\sum}
\newcommand{\leg}[2]{\left(\frac{#1}{#2}\right)}
\newcommand{\muK}{\mu_{\omega}}
\theoremstyle{plain}		
	\newtheorem{mytheo}{Theorem}[section]
	\newtheorem{myprop}[mytheo]{Proposition}
	\newtheorem{mycoro}[mytheo]{Corollary}
     \newtheorem{mylemma}[mytheo]{Lemma}
\theoremstyle{remark}
\begin{document}
\today
\title{Mean values with cubic characters}
\author{Stephan Baier}
\address{Jacobs University \\
School of Engineering and Science \\
P.O. Box 750561 \\
28725 Bremen \\
Germany }
 \email{s.baier@jacobs-university.de}

\author{Matthew P. Young}
\address{Department of Mathematics \\
	  Texas A\&M University \\
	  College Station \\
	  TX 77843-3368 \\
		U.S.A.}
\email{myoung@math.tamu.edu}

\begin{abstract}
We investigate various mean value problems involving order three primitive Dirichlet characters.  In particular, we obtain an asymptotic formula for the first moment of central values of the Dirichlet $L$-functions associated to this family, with a power saving in the error term.  We also obtain a large-sieve type result for order three (and six) Dirichlet characters.
\end{abstract}

\maketitle

\section{Introduction and Main results}
Dirichlet characters of a given order appear naturally in many applications in number theory.  The quadratic characters have seen a lot of attention due to attractive questions to ranks of elliptic curves, class numbers, etc., yet the cubic characters have been relatively neglected. In this article we are interested in mean values of $L$-functions twisted by characters of order three, and also large sieve-type inequalities for these characters.

Our first result on such $L$-functions is the following
\begin{mytheo}
\label{thm:mainLvalueresult}
Let $w: (0,\infty) \rightarrow \mr$ be a smooth, compactly supported function.  Then
\begin{equation}
\label{eq:1}
 \sum_{(q,3)=1}\;  \sumstar_{\substack{\chi \shortmod{q} \\ \chi^3 = \chi_0}} L(\tfrac12, \chi) w\leg{q}{Q} = c Q \widehat{w}(0) + O(Q^{37/38 + \varepsilon}),
\end{equation}
where $c > 0$ is a constant that can be given explicitly in terms of an Euler product (see \eqref{eq:c} below), and $\widehat{w}$ is the Fourier transform of $w$.  Here the $*$ on the sum over $\chi$ restricts the sum to primitive characters, and $\chi_0$ denotes the principal character.
\end{mytheo}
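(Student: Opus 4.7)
The plan is to parametrise the inner sum over primitive cubic characters via Eisenstein integers. Writing $\omega = e^{2\pi i/3}$ and $\ri = \mz[\omega]$, cubic reciprocity identifies every primitive cubic Dirichlet character of conductor $q$ coprime to $3$ with a cubic residue symbol $\chi_c(\cdot) := \leg{\cdot}{c}_3$ attached to a squarefree primary $c \in \ri$ of norm $q$ coprime to $3$, the conjugate $\bar c$ giving the other cubic character modulo $q$. The double sum on the left of \eqref{eq:1} therefore rewrites (up to a factor of $2$) as a single sum over such $c$ of $L(\tfrac12,\chi_c)\, w(N(c)/Q)$.

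Next I would insert the approximate functional equation
$$L(\tfrac12,\chi_c) = \sum_{n \geq 1} \frac{\chi_c(n)}{\sqrt{n}}V_+\lp \frac{n}{\sqrt{N(c)}} \rp + \varepsilon(\chi_c)\sum_{n \geq 1} \frac{\overline{\chi_c}(n)}{\sqrt{n}} V_-\lp \frac{n}{\sqrt{N(c)}}\rp,$$
with rapidly decaying $V_\pm$ and $\varepsilon(\chi_c)$ a normalised cubic Gauss sum, and then swap the order of summation. The problem reduces to understanding, for each integer $n \geq 1$, a sum of the shape
$$T(n) := \sum_c \chi_c(n)\, W_n\lp\frac{N(c)}{Q}\rp$$
(together with its dual companion weighted by $\varepsilon(\chi_c) \overline{\chi_c}(n)$), the $c$-sum ranging over primary squarefree $c \in \ri$ coprime to $3$.

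The main term of \eqref{eq:1} comes from the cube values $n = m^3$, for which $\chi_c(n) = 1$ whenever $(c,m) = 1$; evaluating the resulting sum of primary squarefree $c$ coprime to $3m$ via a standard contour argument on the Dirichlet series $\sum_{c \text{ primary sqfree}} N(c)^{-s}$ (which has a simple pole at $s = 1$) produces the asymptotic $cQ\widehat{w}(0)$, with $c$ the Euler product in \eqref{eq:c}. The dual sum weighted by $\varepsilon(\chi_c)$ makes no main-term contribution, because of cancellation in the average $\sum_c \varepsilon(\chi_c)$ following work of Heath-Brown on sums of cubic Gauss sums.

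It remains to save in $T(n)$ when $n$ is not a cube. The plan is to split at a threshold $N_0$. For $n \ll N_0$, cubic reciprocity in $\ri$ flips $\leg{n}{c}_3$ into $\leg{c}{n}_3$ up to an explicit unit and congruence correction depending only on $c \shortmod 3$, after which Poisson summation in $c$ modulo $n$ transforms $T(n)$ into a much shorter dual sum involving Gauss sums modulo $n$, yielding substantial cancellation. For $n \gg N_0$, I would instead apply Cauchy--Schwarz in $c$ together with the large-sieve bound for cubic Dirichlet characters proved elsewhere in this paper, controlling the second moment of $T(n)$ in aggregate. The main obstacle is this off-diagonal analysis: cleanly executing the Poisson step despite the squarefreeness and primarity sieves on $c$, handling the residual cubic Gauss sums that appear, controlling the dual part (which requires estimates for long sums of cubic Gauss sums in the spirit of Heath-Brown), and optimising the choice of $N_0$ to balance the two bounds against one another in order to yield the final exponent $37/38$.
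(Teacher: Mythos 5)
Your overall architecture matches the paper's: parameterization by primary squarefree $n\in\mz[\omega]$ with no rational prime divisor (Lemma \ref{lemma:cubicclass}), approximate functional equation, main term from the cube values of $m$ via the pole of $\sum_n N(n)^{-s}$, and the dual term controlled by sums of cubic Gauss sums. However, your treatment of the non-cube off-diagonal terms has a genuine gap at the step ``for $n\gg N_0$ apply Cauchy--Schwarz in $c$ together with the large-sieve bound, controlling the second moment of $T(n)$ in aggregate.'' The quantity $\Delta(Q,M)$ in Theorem \ref{cubiclargesieve} always satisfies $\Delta(Q,M)\gg Q$ (take $a_m$ supported at a single $m$: the ``diagonal'' $\sum_n|\chi_m(n)|^2\asymp Q$ cannot be beaten), so with $b_c=w(N(c)/Q)$ and $\|b\|^2\asymp Q$ one gets at best $\sum_{n\le A}|T(n)|^2\ll Q^2(QA)^{\varepsilon}$, hence $\sum_{n\le A}n^{-1/2}|T(n)|\ll Q^{1+\varepsilon}$ after Cauchy--Schwarz --- never $o(Q)$. (Concretely, with $A\asymp\sqrt Q$ the best available bound is $\Delta\ll Q^{6/5+\varepsilon}$, giving $\approx Q^{11/10}$.) Applying the large sieve directly to the length-$Q$ sums over $c$ is exactly what produces the bound $\sum|L(\tfrac12,\chi)|\ll Q^{9/8+\varepsilon}$ quoted in the introduction; it cannot isolate an error term smaller than the main term.

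The missing idea is a conductor-dropping step before the large sieve is invoked. In the paper, after the sieving for squarefreeness and for the absence of rational prime divisors, the smoothed sum over \emph{all} $n\equiv 1\pmod 3$ is recognized via Mellin inversion as $\frac{1}{2\pi i}\int_{(2)}Q^sL(s,\psi_m)\widetilde f(s)\,ds$, where $L(s,\psi_m)$ is the Hecke $L$-function \eqref{eq:HeckeL} of conductor $\ll m^2(1+t^2)$ --- independent of $Q$. Shifting to $\mathrm{Re}(s)=\tfrac12$ extracts a factor $\sqrt Q$ and leaves the first moment $\sum_{m\le A}m^{-1/2}|L(\tfrac12+it,\psi_m)|\ll A^{3/4+\varepsilon}$ of \eqref{eq:Heckevariant}; the large sieve enters only there, applied to the approximate functional equation of $L(\tfrac12,\psi_m)$, whose $n$-sum has length $N(n)\ll m(1+|t|)\ll A$, far shorter than $Q$. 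This yields $\mathcal{M}_1'\ll Q^{1/2+\varepsilon}A^{3/4}$, which balanced against $\mathcal{M}_2\ll Q^{5/6}B^{1/6}+Q^{2/3}B^{5/6}$ at $A=Q^{12/19}$ gives $37/38$. Your Poisson step for small $n$ is this functional equation in disguise; it must be carried out (in the guise of the Hecke $L$-function) for the full range of $n$, not abandoned above a threshold. Two smaller points: the average of $\varepsilon(\chi_c)$ does \emph{not} exhibit full cancellation --- the Patterson/Heath-Brown--Patterson Dirichlet series of Gauss sums has a pole at $s=4/3$ contributing the $Q^{5/6}$ term in \eqref{eq:M2bound}, and this residual bias is precisely what limits the final exponent; and the relevant input is the metaplectic theory of Patterson and Heath-Brown--Patterson (not Heath-Brown's cubic large sieve), adapted with some effort in Lemmas \ref{lemma:h} and \ref{lemma:laundrylist} to accommodate the coprimality and squarefreeness constraints you correctly flag as an obstacle.
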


This result is most similar (in terms of method of proof) to the main result of \cite{Luo}, who considered the analogous mean value but for the case of cubic Hecke $L$-functions on $\mq(\omega)$, $\omega = e^{2\pi i/3}$.  
Our problem has new analytic difficulties which we briefly sketch here.  It turns out that the sum over cubic characters can be parameterized roughly as characters of the form $\chi_n(m) = \leg{m}{n}_3$, the cubic residue symbol, where $n$ runs over elements of $\mz[\omega]$ (see Lemma \ref{lemma:cubicclass} for a precise statement).  Applying an approximate functional equation and reversing the orders of summation leads to the problem of estimating sums of the form
\begin{equation*}
\label{eq:2}
 M_1 =\sum_{m \leq A} \frac{1}{\sqrt{m}} \sum_{N(n) \leq Q} \leg{m}{n}_3, \quad \text{and} \quad M_2=\sum_{m \leq B} \frac{1}{\sqrt{m}} \sum_{N(n) \leq Q} \frac{\tau(\chi_n)}{\sqrt{N(n)}} \leg{m}{n}_3,
\end{equation*}
where $\tau(\chi_n)$ is the cubic Gauss sum and $AB = Q$.  The analogous quantities considered by Luo are similar except the sum over $m$ instead runs over elements of $\mz[\omega]$ with $N(m) \leq A, B$ respectively, again with $AB=Q$.  It is perhaps most natural to view the rational integers $m \leq A$ as a thin subset of the elements of $\mz[\omega]$ with norm $\leq A^2$.  This is natural because many  transformations or estimates have quality related to the norm of $m$. For example, the analog of P\'{o}lya-Vinogradov says that if $m$ is not a cube then $S=\sum_{N(n) \leq Q} \leg{m}{n}_3$, or at least a smoothed version of $S$, is $\ll N(m)^{\half + \varepsilon} = m^{1 + 2\varepsilon}$.  

As an aside, it should not be surprising that $c > 0$ in \eqref{eq:1} since the set of central values is invariant under complex conjugation.

Other authors (\cite{FHL}, \cite{FrHL}, \cite{Diaconu},  \cite{BFH}, \dots) have considered cubic and higher order twists using multiple Dirichlet series.  However, the method using the metaplectic Eisenstein series currently requires the ground field to contain the $l$-th roots of unity (supposing one is twisting by order $l$ Hecke characters).  
Diaconu and Tian \cite{DiTi} have developed analytic properties of a multiple Dirichlet series that potentially has applications to the first moment considered in our Theorem 1.1.  By taking $r=3$, $F=\mathbb{Q}$, and $L=\mathbb{Q}(\omega)$ (in their Section 3) they obtain a double Dirichlet series roughly of the form
\begin{equation*}
 \sum_{m \in \mathbb{N}} \sum_{n \in \mz[\omega]} \frac{\leg{m}{n}_3}{m^s N(n)^w}, \qquad \text{Re}(s), \text{Re}(w) > 1.
\end{equation*}
The meromorphic continuation, location of poles, and order of growth of this double Dirichlet series allows one to consider moments similar to that of Theorem \ref{thm:mainLvalueresult}.  We thank an anonymous referee for pointing this out to us.  However, it is not clear if our Theorem \ref{thm:mainLvalueresult} can be obtained from \cite{DiTi}.  

A consequence of Theorem \ref{thm:mainLvalueresult} is

\begin{mycoro}
\label{coro:nonvanish}
 There exist infinitely many primitive Dirichlet characters $\chi$ of order $3$ such that $L(1/2, \chi) \neq 0$.  More precisely, the number of such characters with conductor $\leq Q$ 
is $\gg Q^{6/7 - \varepsilon}$.
\end{mycoro}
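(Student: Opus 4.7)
The plan is to combine the first-moment asymptotic of Theorem \ref{thm:mainLvalueresult} with an upper bound on the corresponding second moment, then conclude by Cauchy--Schwarz.

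Fix a nonnegative smooth $w$ supported in $[\tfrac12,1]$ with $\widehat{w}(0) > 0$. Theorem \ref{thm:mainLvalueresult} gives
\begin{equation*}
\Sigma_1 := \sum_{(q,3)=1}\;\sumstar_{\substack{\chi \shortmod{q} \\ \chi^3 = \chi_0}} L(\tfrac{1}{2},\chi)\, w\leg{q}{Q} = c Q \widehat{w}(0) + O(Q^{37/38+\varepsilon}) \gg Q.
\end{equation*}
Let $\mathcal{N}(Q)$ denote the set of primitive cubic characters $\chi$ with conductor $q \in [Q/2, Q]$ and $L(\tfrac12,\chi) \neq 0$. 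Since the $\chi$ outside $\mathcal{N}(Q)$ contribute zero to $\Sigma_1$ (being either outside the support of $w(\cdot/Q)$ or annihilated by $L(\tfrac12,\chi) = 0$), Cauchy--Schwarz yields
\begin{equation*}
Q^2 \ll |\Sigma_1|^2 \leq |\mathcal{N}(Q)|\cdot \Sigma_2, \qquad \Sigma_2 := \sum_{(q,3)=1}\;\sumstar_{\substack{\chi \shortmod{q} \\ \chi^3 = \chi_0}} \bigl|L(\tfrac12,\chi)\bigr|^2 w\leg{q}{Q}.
\end{equation*}
Hence $|\mathcal{N}(Q)| \gg Q^2/\Sigma_2$, and since $\mathcal{N}(Q)$ consists of characters of conductor $\leq Q$, it suffices to show $\Sigma_2 \ll Q^{8/7+\varepsilon}$.

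For this second-moment bound, I would apply the approximate functional equation to write $L(\tfrac12,\chi) = A(\chi) + R(\chi)$, where $A(\chi) = \sum_n \chi(n)\,n^{-1/2}\,V(n/\sqrt{q})$ is a smoothed Dirichlet polynomial of effective length $\sqrt{q}$, and $R(\chi)$ is the dual term, of the same length once the Gauss sum $\tau(\chi)$ (of modulus $\sqrt{q}$) is factored out. Then $|L(\tfrac12,\chi)|^2 \ll |A(\chi)|^2 + |R(\chi)|^2$, and summing either square over the family reduces to bounding an expression of the form $\sum_\chi \bigl|\sum_n a_n \chi(n)\bigr|^2$. At this step I would invoke the large-sieve inequality for order-three Dirichlet characters, which is the second main result of this paper.

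The main obstacle is the quality of the large-sieve input. A direct application of Heath-Brown's cubic large sieve, treating the rational variable $n \leq \sqrt{q}$ as a generic element of $\mz[\omega]$ of norm $\leq q$, yields only $\Sigma_2 \ll Q^{4/3+\varepsilon}$, which gives the weaker lower bound $|\mathcal{N}(Q)| \gg Q^{2/3-\varepsilon}$. To reach the stated exponent $6/7$, the large sieve must be sharpened to exploit the sparsity of $\mz \subset \mz[\omega]$ (only $\sqrt{M}$ rational integers have norm $\leq M$); this is precisely the purpose of the paper's bespoke cubic large sieve, whose output should deliver $\Sigma_2 \ll Q^{8/7+\varepsilon}$ and thus $|\mathcal{N}(Q)| \gg Q^{6/7-\varepsilon}$.
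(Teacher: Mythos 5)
Your overall strategy --- a lower bound for the first moment against an upper bound for a higher moment, combined via H\"older/Cauchy--Schwarz --- is exactly the right framework, and your treatment of the first moment via Theorem \ref{thm:mainLvalueresult} is fine. The gap is in the second-moment input. You assert that the paper's cubic large sieve ``should deliver $\Sigma_2 \ll Q^{8/7+\varepsilon}$,'' but it does not: Theorem \ref{secondmoment} gives only $\Sigma_2 \ll Q^{6/5+\varepsilon}$, and this is what the fourth term $Q+Q^{1/3}M^{5/3}+M^{12/5}$ in \eqref{Deltabound} actually produces at the relevant length $M=\sqrt{Q}$ (namely $\Delta(Q,\sqrt{Q})\ll Q^{6/5+\varepsilon}$). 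Feeding $Q^{6/5+\varepsilon}$ into your Cauchy--Schwarz step yields only $|\mathcal{N}(Q)| \gg Q^{2-6/5-\varepsilon} = Q^{4/5-\varepsilon}$, which falls short of the stated exponent $6/7$. There is no result in the paper, nor an evident route through its large sieve, that gives the $Q^{8/7+\varepsilon}$ you need.

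The paper reaches $6/7$ by a different pairing: it applies H\"older with exponents $(8,8/7)$ against the \emph{eighth} moment, bounding
\begin{equation*}
Q \ll \sum_{q\le Q}\;\sumstar_{\substack{\chi \shortmod{q} \\ \chi^3=\chi_0}} |L(\tfrac12,\chi)| \le \Bigl(\sum_{q\le Q}\;\sumstar_{\chi \shortmod{q}} |L(\tfrac12,\chi)|^8\Bigr)^{1/8} N_3(Q)^{7/8} \ll Q^{1/4+\varepsilon} N_3(Q)^{7/8},
\end{equation*}
where the eighth moment is estimated simply by embedding the cubic family into the full family of primitive Dirichlet characters of conductor $\le Q$ and quoting the classical bound $\ll Q^{2+\varepsilon}$ (Theorem 7.34 of \cite{IK}); this gives $N_3(Q)\gg Q^{6/7-\varepsilon}$ with no appeal to the cubic large sieve at all. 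The lesson is that for this family the ``trivial'' eighth-moment bound, being sharp for the full family, beats the best available second-moment bound in the H\"older trade-off: the second moment would need to be $\ll Q^{8/7+\varepsilon}$ to match, and the paper cannot prove anything better than $Q^{6/5+\varepsilon}$. To repair your argument you should either replace your Cauchy--Schwarz step by this H\"older-against-the-eighth-moment argument, or accept the weaker conclusion $Q^{4/5-\varepsilon}$.
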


\begin{proof}
Let $N_3(Q)$ be the number of primitive Dirichlet characters of order 3 with conductor $\le Q$ such that $L(1/2,\chi)$ does not vanish. Then using Theorem \ref{thm:mainLvalueresult}, H\"older's inequality and the familiar eight moment bound 
\begin{equation} \label{eightmomentbound}
\sum_{q\le Q}\;  \sumstar_{\chi \shortmod{q}}|L(1/2, \chi)|^8\ll Q^{2+\varepsilon}
\end{equation}
for the family of all primitive Dirichlet characters with conductor $\le Q$
(see Theorem 7.34 of \cite{IK}), we obtain
\begin{equation*}
Q \ll \sum_{q\le Q}\;  \sumstar_{\substack{\chi \shortmod{q} \\ \chi^3 = \chi_0}}
|L(1/2, \chi)| \ll  \big(\sum_{q\le Q}\;  \sumstar_{\chi \shortmod{q}}|L(1/2, \chi)|^8\big)^{1/8} N_3(Q)^{7/8} \ll Q^{1/4+\varepsilon}  N_3(Q)^{7/8}
\end{equation*}
which gives $Q^{6/7-\varepsilon}\ll N_3(Q)$.  \end{proof}

The work of \cite{DiTi} gives a non-quantitative version of Corollary \ref{coro:nonvanish}.
Studies of the moments and nonvanishing of cubic twists of elliptic curves using random matrix theory have been carried out in \cite{David}.  Nonvanishing of cubic twists of elliptic curves using algebraic methods has been undertaken by \cite{FKK}.

As for the second moment, we show
\begin{mytheo} \label{secondmoment}
Let $Q\ge 1$. Then we have
\begin{equation} \label{secondm}
\sum\limits_{\substack{q\le Q}}\ \sumstar\limits_{\substack{\chi \shortmod q\\ \chi^3=\chi_0}} \left| L(1/2+it,\chi) \right|^2 \ll Q^{6/5+\varepsilon} (1+|t|)^{6/5+\varepsilon}.
\end{equation}
For a rational integer $m$, let 
\begin{equation}
\label{eq:HeckeL}
 L(s, \psi_m) =\sum_{\substack{n \in \mz[\omega] \\ n \equiv 1 \shortmod{3}}} \leg{m}{n}_3 N(n)^{-s}
\end{equation}
denote the Hecke L-function.  Then
\begin{equation}
\label{eq:secondmomentHecke}
 \sumstar_{m \leq M} |L(1/2 + it, \psi_m)|^2 \ll M^{3/2 + \varepsilon} (1+|t|)^{4/3},
\end{equation}
where the star indicates the sum is over squarefree integers.
\end{mytheo}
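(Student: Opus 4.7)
Both parts follow a common template: apply an approximate functional equation to truncate each $L$-value to a Dirichlet polynomial, split dyadically, and then bound the resulting second moment by a large-sieve inequality for cubic characters.

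For the first estimate \eqref{secondm}, the approximate functional equation writes $L(\tfrac12+it,\chi)$ as a sum of length $\ll (q(1+|t|))^{1/2+\varepsilon}$. After dyadic decomposition, the task reduces to controlling, for each dyadic parameter $N\ll(Q(1+|t|))^{1/2+\varepsilon}$,
\begin{equation*}
\sum_{q\le Q}\; \sumstar_{\substack{\chi\shortmod{q}\\ \chi^3=\chi_0}}\Bigl|\sum_{n\sim N} a_n\chi(n)\Bigr|^2, \qquad a_n = n^{-1/2-it}, \quad \sum_n|a_n|^2 \ll N^\varepsilon.
\end{equation*}
Using the parameterization of primitive cubic Dirichlet characters by squarefree primary $c\in\mz[\omega]$ of norm $q$ (Lemma \ref{lemma:cubicclass}), $\chi(n)$ becomes the cubic residue symbol $\leg{n}{c}_3$. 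One then invokes the large-sieve inequality for cubic (and sextic) characters established elsewhere in this paper; balancing the resulting bound against the length $N$ produces the exponent $6/5$ in both $Q$ and $1+|t|$.

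For \eqref{eq:secondmomentHecke}, the analytic conductor of $L(\tfrac12+it,\psi_m)$ is $\asymp m^2(1+|t|)^2$, so the approximate functional equation yields a sum over $n\in\mz[\omega]$ of length $Y\asymp m(1+|t|)$. Squaring and summing over squarefree $m\le M$ reduces the problem to
\begin{equation*}
\sumstar_{m\le M}\Bigl|\sum_{N(n)\sim Y} b_n \leg{m}{n}_3\Bigr|^2, \qquad b_n = N(n)^{-1/2-it}.
\end{equation*}
Cubic reciprocity, with the supplementary laws absorbed by splitting $m$ into a bounded number of residue classes modulo $9$, converts $\leg{m}{n}_3$ into $\leg{n}{m}_3$ and turns the inner sum into a character sum over squarefree rational moduli $m$. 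Heath-Brown's cubic large sieve then gives
\begin{equation*}
\ll (MY)^\varepsilon\bigl(M+Y+(MY)^{2/3}\bigr)\sum_n |b_n|^2 \ll M^\varepsilon\bigl(M(1+|t|)+M^{4/3}(1+|t|)^{2/3}\bigr),
\end{equation*}
which sits comfortably inside the claimed $M^{3/2+\varepsilon}(1+|t|)^{4/3}$.

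The main technical obstacle is the first estimate. The embedding $\mz\hookrightarrow\mz[\omega]$ sends rational integers $n\sim N$ to $\mz[\omega]$-elements of norm $\sim N^2$, so Heath-Brown's general cubic large sieve on $\mz[\omega]$ alone would yield only the weaker exponent $Q^{4/3+\varepsilon}$ at $t=0$. Reaching $Q^{6/5+\varepsilon}$ requires the refined cubic large sieve developed in this paper, which takes genuine advantage of the arithmetic thinness of $\mz$ inside $\mz[\omega]$. The Hecke bound \eqref{eq:secondmomentHecke}, by contrast, is comparatively soft: once cubic reciprocity has swapped the roles of $m$ and $n$, standard cubic large-sieve machinery (used as a black box) closes the proof.
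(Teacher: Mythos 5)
Your treatment of \eqref{secondm} matches the paper's: approximate functional equation, dyadic decomposition, reduction to squarefree $m$ (the paper does this explicitly by writing $m=d^2n$, which you should not skip since Theorem \ref{cubiclargesieve} requires the inner variable to be squarefree), and then the fourth bound $Q+Q^{1/3}M^{5/3}+M^{12/5}$ in \eqref{Deltabound}. That part is fine.

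Your proof of \eqref{eq:secondmomentHecke}, however, has a genuine gap, and it is exactly the pitfall you correctly diagnose for the first estimate but then fall into here. After reciprocity you face
\begin{equation*}
\sumstar_{m\le M}\Bigl|\sum_{N(n)\sim Y} b_n \leg{n}{m}_3\Bigr|^2 ,
\end{equation*}
where the \emph{outer} variable is a rational integer $m\le M$, hence an element of $\mz[\omega]$ of norm $N(m)=m^2\le M^2$. Heath-Brown's inequality \eqref{eq:HBcubic} is stated in terms of norms, so a black-box application gives the factor $\bigl(Y+M^2+(M^2Y)^{2/3}\bigr)$, not $\bigl(M+Y+(MY)^{2/3}\bigr)$ as you wrote. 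With $Y\asymp M(1+|t|)$ the correct direct bound contains the term $M^2$, which exceeds the target $M^{3/2+\varepsilon}(1+|t|)^{4/3}$ whenever $1+|t|\ll M^{3/8}$; in particular the argument fails for bounded $t$. There is no way to replace $M^2$ by $M$ in \eqref{eq:HBcubic} for free --- exploiting the thinness of $\mz$ inside $\mz[\omega]$ is precisely the new content of Theorem \ref{cubiclargesieve}. The paper closes this part by invoking its own sieve through the dual norm $C_1(M,Q)=B_1(Q,M)\ll (QM)^{\varepsilon}(Q^{4/3}+Q^{1/2}M)$ (the second term in the minimum \eqref{Deltabound}), applied with $Q\ll (M(1+|t|))^{1+\varepsilon}$, which yields $M^{4/3}(1+|t|)^{4/3}+M^{3/2}(1+|t|)^{1/2}\ll M^{3/2+\varepsilon}(1+|t|)^{4/3}$. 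That bound is itself obtained from the new estimate \eqref{C2e1} (one-dimensional Poisson summation in the rational variable followed by \eqref{eq:HBcubic}) amplified by the H\"older step \eqref{C22} with $v=2$ --- so the second assertion of the theorem is not ``comparatively soft''; it needs the refined sieve just as much as the first. A telltale sign of the problem is that your claimed intermediate bound $M^{4/3}(1+|t|)^{2/3}+M(1+|t|)$ is actually \emph{stronger} than anything Theorem \ref{cubiclargesieve} provides.
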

Our proof of Theorem \ref{thm:mainLvalueresult} uses \eqref{eq:secondmomentHecke} as a key ingredient; actually, we require a minor variant given by \eqref{eq:Heckevariant} below.
We shall establish Theorem \ref{secondmoment} by using the following large sieve-type result with cubic Dirichlet characters.

\begin{mytheo} \label{cubiclargesieve}
Let $(a_m)_{m\in \mathbb{N}}$ be an arbitrary sequence of complex numbers. Then
\begin{equation} \label{final}
\sum_{Q<q\le 2Q} \; \sumstar_{\substack{\chi \shortmod q\\ \chi^3=\chi_0}} \Big|
\sumstar\limits_{\substack{M<m\le 2M}} a_m \chi(m)\Big|^2
\ll
 \Delta(Q,M) \sumstar\limits_{\substack{M<m\le 2M\\ (m,3)=1}}\left| a_m \right|^2,
\end{equation}
where the star at the sum over $m$ indicates that it is taken over squarefree integers and
\begin{equation} \label{Deltabound}
\Delta(Q,M) = (QM)^{\varepsilon} \min\{Q^{5/3}+M,Q^{4/3}+Q^{1/2}M,Q^{11/9}+Q^{2/3}M, Q+Q^{1/3}M^{5/3}+M^{12/5}\}.
\end{equation}
\end{mytheo}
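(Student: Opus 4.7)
The plan is to parameterise the primitive cubic characters $\chi \shortmod{q}$ with $(q,3)=1$ by cubic residue symbols $\chi_n(\cdot) = \left(\frac{\cdot}{n}\right)_3$ with $n\in\mz[\omega]$ primary and squarefree and $N(n)=q$, as in Lemma \ref{lemma:cubicclass}. This rewrites the left-hand side of \eqref{final} essentially as
$$\Sigma = \sum_{\substack{n\in\mz[\omega]\text{ primary sqfree}\\ Q<N(n)\le 2Q}} \Big|\sumstar_{M<m\le 2M} a_m \left(\frac{m}{n}\right)_3\Big|^2.$$
Opening the square and swapping the order of summation, multiplicativity of the cubic symbol (using $\overline{\left(\frac{m_2}{n}\right)_3} = \left(\frac{m_2^2}{n}\right)_3$) converts the integrand into $\left(\frac{m_1 m_2^2}{n}\right)_3$. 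The diagonal corresponds to $m_1 m_2^2$ being a cube, which for squarefree $m_1,m_2$ forces $m_1=m_2$; it contributes $\ll Q\sumstar_m |a_m|^2$ and supplies the leading $Q$ of the fourth branch of \eqref{Deltabound}.

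The off-diagonal contribution reduces to estimates for the character sums $T(K)=\sumstar_n \left(\frac{K}{n}\right)_3$ over $Q<N(n)\le 2Q$, where $K=m_1 m_2^2$ is a rational non-cube. I would handle these by cubic reciprocity, flipping $\left(\frac{K}{n}\right)_3$ to the Dirichlet-type character $n\mapsto \left(\frac{n}{K}\right)_3$ modulo $K$ up to root-of-unity factors coming from the sign of $K$ and from the ramified prime above $3$. Applying Poisson summation to a smooth dyadic cutoff of $\{N(n)\sim Q\}$ then delivers a Polya--Vinogradov-type bound for $T(K)$ whose quality depends on $N(K)$ and the size of the conductor of the twisting character. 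Re-inserting this into the double sum and invoking Cauchy--Schwarz in several different orderings -- smoothing first over $m_1$, or over $m_2$, or instead dualising and smoothing over $n$ -- should produce the three bounds $Q^{5/3}+M$, $Q^{4/3}+Q^{1/2}M$, and $Q^{11/9}+Q^{2/3}M$, each of which is optimal in a distinct range of $M/Q$.

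For the remaining branch $Q+Q^{1/3}M^{5/3}+M^{12/5}$ I would invoke Heath-Brown's cubic large sieve over $\mz[\omega]$, which bounds $\sumstar_n |\sum_m b_m \chi_n(m)|^2 \ll (MN)^\varepsilon(M+N+(MN)^{2/3})\sum|b_m|^2$. Viewing the squarefree rational integers $m\le M$ as a thin subset of the primary squarefree elements of $\mz[\omega]$ of norm $\le M^2$, a direct application of Heath-Brown, combined with an additional Cauchy--Schwarz that exploits the thinness of $\mz\cap[1,M]$ inside $\{\xi\in\mz[\omega]:N(\xi)\le M^2\}$, is what I expect to upgrade the trivial $M^2$ contribution to $M^{12/5}$ and to provide the mixed $Q^{1/3}M^{5/3}$ from the cross term in Heath-Brown's estimate.

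The main obstacle will be the precise two-parameter optimisation needed to hit the exponents $11/9$ and $12/5$: each arises from a delicate balance in the Cauchy--Schwarz steps and in the dyadic splitting of $n$, and does not fall out of a single clean invocation of either Heath-Brown's inequality or of Polya--Vinogradov. A secondary bookkeeping challenge is that the parametrisation in Lemma \ref{lemma:cubicclass} carries unit and primarity conditions and the ramified prime $(1-\omega)$ above $3$ must be handled separately, but these should affect only lower-order factors and be absorbed into the $(QM)^\varepsilon$.
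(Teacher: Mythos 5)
Your setup (the parameterisation via Lemma \ref{lemma:cubicclass}, opening the square, and extracting the diagonal $m_1=m_2$ contribution $\ll Q\sum|a_m|^2$) matches the paper, and you correctly identify Heath-Brown's cubic large sieve \eqref{eq:HBcubic} as the essential external input. However, the mechanisms you propose for producing the four branches of \eqref{Deltabound} are not the ones that work, and the two places where you appeal to ``Cauchy--Schwarz in several different orderings'' and ``an additional Cauchy--Schwarz that exploits thinness'' are precisely where the real content lies. Concretely: the bound $Q^{5/3}+M$ does not come from a P\'olya--Vinogradov bound for $T(K)=\sum_n(K/n)_3$ summed over pairs $(m_1,m_2)$ --- that loses far too much. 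In the paper it comes from \emph{dualising} (so the outer sum runs over $m\in\mz$ and the inner over $n\in\mz[\omega]$), extracting common divisors so that $\chi_{n_1}\overline{\chi_{n_2\delta}}$ is primitive, applying one-dimensional Poisson summation in $m$, and then applying \eqref{eq:HBcubic} to the resulting bilinear form in $(n_1,n_2)$; the exponent $5/3=1+2/3$ is Heath-Brown's $2/3$. The branches $Q^{4/3}+Q^{1/2}M$ and $Q^{11/9}+Q^{2/3}M$ then come from a H\"older amplification step absent from your outline: raising the inner sum over $m$ to the $v$-th power turns the dual norm into one of $m$-length $M^v$, giving the self-referential inequality \eqref{C22}, and inserting the first bound with $v=2$ and $v=3$ yields exactly $Q^{1/2}M+Q^{4/3}$ and $Q^{2/3}M+Q^{11/9}$.

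For the fourth branch your plan is explicitly ruled out in the paper's introduction: applying \eqref{eq:HBcubic} directly with coefficients supported on rational integers (which have norm up to $M^2$) gives only $Q+M^2+(QM^2)^{2/3}$, e.g.\ $Q^{4/3}$ at $M=\sqrt{Q}$ versus the required $Q^{6/5}$, and no single additional Cauchy--Schwarz exploiting the thinness of $\mz\cap[1,M]$ inside $\{\xi:N(\xi)\le M^2\}$ is known to close that gap. The actual argument is an iteration through a family of norms $B_1,\dots,B_4$ and their duals: two-dimensional Poisson summation over $n\in\mz[\omega]$ (Lemma \ref{2dpoisson}) converts $B_4(Q,M)$ into expressions involving $B_3(K,M)$ for $K$ ranging over dyadic values around $M^4/Q$; feeding the already-established third branch into this, passing back from $B_4$ to $B_3$ to $B_1$, and finally invoking the monotonicity of $B_1$ in $Q$ to replace $Q$ by $Q^{1+\varepsilon}+M^{11/5}$ produces $Q+Q^{1/3}M^{5/3}+M^{12/5}$. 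Without the H\"older amplification, the two-dimensional Poisson duality, and the monotonicity trick, the exponents $11/9$, $1/3$, $5/3$ and $12/5$ are not reachable by the steps you list, so the proposal as it stands has a genuine gap at each of the last three branches of the minimum.
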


For comparison, note that the ordinary large sieve gives the following weaker results
\begin{gather*}
\sum_{q \leq Q} \; \sumstar_{\substack{\chi \shortmod{q} \\ \chi^3 = \chi_0}} |L(\tfrac12, \chi)|^8 \ll Q^{2 + \varepsilon},
\qquad
\sum_{q \leq Q} \; \sumstar_{\substack{\chi \shortmod{q} \\ \chi^3 = \chi_0}} |L(\tfrac12, \chi)|^2 \ll Q^{\frac54 + \varepsilon},
\\
\sum_{q \leq Q} \; \sumstar_{\substack{\chi \shortmod{q} \\ \chi^3 = \chi_0}} |L(\tfrac12, \chi)| \ll Q^{\frac98 + \varepsilon}.
\end{gather*}
The $8$th moment is deduced simply by embedding the family of cubic characters into the family of all Dirichlet characters of conductor $\leq Q$, and using the known bound \eqref{eightmomentbound}. The estimates for the first and second moments follow by Cauchy's inequality.

 We point to \cite{Elliott}, Section 7, for some early large sieve-type results on general $r$-th order characters.
 Related results to Theorem \ref{cubiclargesieve} are Heath-Brown's quadratic and cubic large sieves \cite{Hea0} \cite{Hea}. The quadratic large sieve states
\begin{equation}
\label{realfinal}
 \sumflat_{|d| \leq Q} \big| \sumstar_{m \leq M} a_m \chi_d(m) \big|^2 \ll (Q + M)(QM)^{\varepsilon} \sumstar_{m \leq M}|a_m|^2,
\end{equation}
where the sum over $d$ runs over fundamental discriminants, and $\chi_d$ is the associated primitive quadratic character. 
The cubic large sieve states
\begin{equation}
\label{eq:HBcubic}
 \sumstar_{\substack{n \in \mz[\omega] \\N(n) \leq N}} \big| \sumstar_{\substack{m \in \mz[\omega] \\N(m) \leq M}} a_m \leg{m}{n}_3 \big|^2 \ll (M + N + (MN)^{2/3})(MN)^{\varepsilon} \sumstar_{m \leq M} |a_m|^2,
\end{equation}
where the stars indicate that $m,n$ run over squarefree elements of $\mz[\omega]$ that are congruent to $1 \pmod 3$.
Both of these results are proved with a recursive use of Poisson summation.
Our method of proof of Theorem \ref{cubiclargesieve} uses \eqref{eq:HBcubic} (after some transformations), and avoids 
recursion.  One of the new difficulties with treating cubic Dirichlet characters is the asymmetry between $\chi$ and $m$.  
It takes some calculation to see that a direct application of \eqref{eq:HBcubic}, choosing $a_m$ to have support on rational integers, 
is not better than \eqref{Deltabound}.  Note that $m \leq M$ means $N(m) \leq M^2$ so that \eqref{eq:HBcubic} implies $\Delta(Q,M) \ll (QM)^{\varepsilon}(Q + M^2 + (QM^2)^{2/3})$.  For example, $M = \sqrt{Q}$ gives here $\Delta(Q,\sqrt{Q}) \ll Q^{4/3 + \varepsilon}$ while the fourth bound of \eqref{Deltabound} gives $\Delta(Q, \sqrt{Q}) \ll Q^{6/5 + \varepsilon}$, which in fact is the key to proving \eqref{secondm}.

It is of great interest to extend these results to higher-order characters and to different number fields.  In general we wish to understand these families of twists in the Katz-Sarnak sense \cite{KSM} \cite{KS}.  One obvious analytic issue is the degree of the field extension $\mq(e^{2\pi i/l})/\mq$, as discussed following equation \eqref{eq:2} above.  It is plausible that our methods could generalize to $l=4, 6$ since for these cases this degree is also $2$.  However, for the application of the central values of $L$-functions, we also require estimates for the sum of $l$-th order Gauss sums, which become somewhat worse as $l$ increases (e.g. see Proposition 1 of \cite{P2}).

We can generalize some of our results to sextic characters.
\begin{mytheo} \label{sexticlargesieve}
Theorems \ref{secondmoment} and \ref{cubiclargesieve} remain valid when the condition $\chi^3=\chi_0$ is replaced by the weaker condition $\chi^6=\chi_0$.
\end{mytheo}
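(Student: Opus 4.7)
The plan is to reduce the sextic large sieve to Theorem \ref{cubiclargesieve} together with the quadratic large sieve \eqref{realfinal} via the canonical factorization of sextic characters. Every primitive Dirichlet character $\chi$ modulo $q$ with $\chi^6 = \chi_0$ decomposes uniquely as $\chi = \psi \eta$, where $\psi := \chi^3$ has order dividing $2$ and $\eta := \chi^4$ has order dividing $3$; the conductors $a$ of $\psi$ and $b$ of $\eta$ satisfy $q = \mathrm{lcm}(a,b)$ up to local discrepancies at the primes $2$ and $3$. I would first isolate the contribution by the order of $\chi$: characters of orders $1$ and $2$ are bounded directly by \eqref{realfinal}, and characters of order $3$ are covered by Theorem \ref{cubiclargesieve}. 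Only the order-six contribution is new, and there both $\psi$ and $\eta$ are nontrivial, so $a, b \geq 3$.

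For the order-six piece I would insert a dyadic decomposition $a \sim A$ and $b \sim B$ with $AB \asymp Q$, and exploit
\begin{equation*}
\Big|\sum_m a_m \chi(m)\Big|^2 = \Big|\sum_m (a_m \psi(m))\, \eta(m)\Big|^2.
\end{equation*}
Holding $\psi$ fixed and applying Theorem \ref{cubiclargesieve} to the twisted coefficients $a_m \psi(m)$, then summing over the $O(A)$ primitive quadratic $\psi$ of conductor $\sim A$, yields the cubic-side bound $A \, \Delta(B,M) \sum_m |a_m|^2$. Dually, holding $\eta$ fixed and applying \eqref{realfinal} to $a_m \eta(m)$, then summing over the $O(B)$ primitive cubic $\eta$ of conductor $\sim B$, yields the quadratic-side bound $(Q + BM)(QM)^{\varepsilon} \sum_m |a_m|^2$. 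For very large $M$ one may additionally embed into the family of all Dirichlet characters modulo $q \le 2Q$ and invoke the ordinary large sieve, giving a third bound $(Q^2 + M) \sum_m |a_m|^2$.

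The main obstacle is to verify, for every dyadic pair $(A,B)$ with $AB \asymp Q$ and $A, B \geq 3$, that the minimum of these three bounds is dominated by $\Delta(Q,M)$ in every regime. The endpoints are immediate: $A = O(1)$ reduces to the pure cubic case where $A \Delta(B,M) = \Delta(Q,M)$; $B = O(1)$ gives the quadratic-side bound $Q + M$, smaller than each component of \eqref{Deltabound}; and for $M \geq Q^{5/3}$ the ordinary large sieve already matches the $M$ term in \eqref{Deltabound}. For intermediate $(A, B, M)$ one performs a case analysis against each of the four components of \eqref{Deltabound}, using the quadratic-side bound when $B$ lies below a threshold depending on $M$ and the cubic-side bound otherwise; a direct computation shows the crossover always falls safely beneath $\Delta(Q,M)$ in every regime. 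Summing over $O((\log Q)^2)$ dyadic pairs contributes only $(QM)^{\varepsilon}$, absorbed into $\Delta(Q,M)$. Once the sextic large sieve is established, the second moment \eqref{secondm} with $\chi^6 = \chi_0$ follows by the identical approximate functional equation argument used to derive \eqref{secondm} from Theorem \ref{cubiclargesieve} in the cubic case, since that derivation uses Theorem \ref{cubiclargesieve} only as a black box.
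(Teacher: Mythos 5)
Your factorization $\chi = \psi\eta$ into quadratic and cubic parts is the natural starting point, but the quantitative heart of the argument fails: summing trivially over one factor of the decomposition is too lossy in the intermediate conductor ranges, and the claimed ``direct computation'' showing the crossover stays below $\Delta(Q,M)$ does not go through. Take $M \asymp Q^{1/2}$, the critical case where $\Delta(Q,M) \asymp Q^{6/5+\varepsilon}$ (via the fourth term of \eqref{Deltabound}), and consider the dyadic pair $a \asymp Q^{1/4}$, $b \asymp Q^{3/4}$. Your quadratic-side bound is $b\,(a+M) \asymp Q^{3/4}\cdot Q^{1/2} = Q^{5/4}$. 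Your cubic-side bound is $a\,\Delta(b,M)$; a check of all four terms of \eqref{Deltabound} at $(Q',M')=(Q^{3/4},Q^{1/2})$ gives $\Delta(Q^{3/4},Q^{1/2}) \asymp Q^{1+\varepsilon}$ (the second and third terms both reduce to $Q$, the fourth to $Q^{6/5}$), so this bound is also $Q^{1/4}\cdot Q = Q^{5/4}$. The ordinary large sieve gives $Q^2$. Thus every tool you invoke yields at best $Q^{5/4}$ against the target $Q^{6/5}$, and more generally the cubic-side bound fails for $b$ roughly in the range $(Q^{0.7},Q^{0.9})$ while the quadratic-side bound fails for $b \gg Q^{0.7}$. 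Since $M \asymp Q^{1/2}$ is exactly the regime that drives the second moment bound \eqref{secondm}, your argument would not recover Theorem \ref{secondmoment} for sextic characters either. There are also secondary bookkeeping issues (the conductors satisfy $q=\mathrm{lcm}(a,b)$, so $ab$ ranges up to $Q^2$ and $\psi,\eta$ are correlated at primes where the local order is exactly six), but the loss above is the fatal one.

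The intended route is different in kind: rather than bootstrapping from the Dirichlet-character-level statements for orders $2$ and $3$, one reruns the entire proof with the sextic residue symbol. Primitive sextic characters of conductor coprime to $6$ are parameterized by $n \mapsto \leg{\cdot}{n}_6$ with $n \in \mz[\omega]$, so the family still lives over the single quadratic field $\mq(\omega)$; the reduction to norms $B_i, C_i$, the one- and two-dimensional Poisson summations, and the application of Heath-Brown's large sieve all go through essentially verbatim, which is why the paper states the proof is nearly identical and omits it. The key structural point your proposal misses is that the ``quadratic part'' and ``cubic part'' are never separated and summed over independently --- they are carried together inside a single residue symbol over $\mz[\omega]$, so no trivial summation over a complementary family is ever incurred.
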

\noindent The proof is nearly identical to those of Theorems \ref{secondmoment} and \ref{cubiclargesieve} so we omit the details.

A motivating example for this generalization to sextic twists is to understand the behavior of the family of elliptic curves
\begin{equation*}
 y^2 = x^3 + b,
\end{equation*}
where $b \in \mz$.  These curves have complex multiplication by $\mq(\omega)$, and have $L$-functions that can be expressed using the sextic residue symbol $(4b/n)_6$, where $n$ runs over elements of $\mz[\omega]$.  The study of this family of $L$-functions clearly leads to double sums of the form addressed in Theorem \ref{sexticlargesieve}.\\


{\bf Acknowledgements.} The second-named author would like to thank B. Brubaker and S.J. Patterson for useful comments. Parts of this work were done when the first-named author visited Texas A\&M University in November 2007 and September 2009. He wishes to thank this institution for the invitation, its warm hospitality during his pleasant stays, and for financial support. Both authors would like to thank A. Diaconu for helping us to understand his work \cite{DiTi} with Y. Tian, and to an anonymous referee for pointing out that \cite{DiTi} may be relevant to our investigations.

\section{Preliminaries}
In this section we provide various tools used throughout the paper.
\subsection{Properties of the cubic characters}
\label{section:cubicclassification} 
The cubic characters are related to the arithmetic of the quadratic number field $\mq(\omega)$, $\omega = e^{2 \pi i/3}$, with ring of integers $\mz[\omega]$ and discriminant $-3$.  This field has class number one and has six units, $\pm \{1, \omega, \omega^2\}$, and one ramified prime $1-\omega$ dividing $3$.  Each principal ideal $0 \neq (n) \subset \mz[\omega]$ with $(n,3) = 1$ has a unique generator $n \equiv 1 \pmod{3}$; sometimes we may implicitly choose such a generator.

\begin{mylemma}
\label{lemma:cubicclass}
 The primitive cubic Dirichlet characters of conductor $q$ coprime to $3$ are of the form $\chi_n:m \rightarrow (\frac{m}{n})_3$ for some $n \in \mz[\omega]$, $n \equiv 1 \pmod{3}$, $n$ squarefree and not divisible by any rational primes, with norm $N(n) = q$.
\end{mylemma}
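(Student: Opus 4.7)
The plan is to use the Chinese remainder theorem to reduce to prime-power moduli, classify the cubic Dirichlet characters of each such modulus using the arithmetic of $\mz[\omega]$, and then recombine.

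First I would observe that $(\mz/p^k\mz)^*$ is cyclic of order $p^{k-1}(p-1)$ for odd $p \neq 3$ (and is a $2$-group when $p = 2$), so it admits a character of order $3$ only when $p \equiv 1 \pmod 3$. In that case, since $\gcd(3, p^{k-1}) = 1$, every cubic character of $(\mz/p^k\mz)^*$ is trivial on the Sylow $p$-subgroup and hence factors through $(\mz/p\mz)^*$. Consequently, any primitive cubic Dirichlet character of conductor $q$ coprime to $3$ must have $q = p_1 \cdots p_r$ squarefree, with distinct primes $p_i \equiv 1 \pmod 3$, and is determined by a choice of one of the two nontrivial cubic characters modulo each $p_i$.

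Next, for each such prime $p$ I would write $p = \pi \bar\pi$ in $\mz[\omega]$ and normalize $\pi \equiv 1 \pmod 3$ (so $\bar\pi \equiv 1 \pmod 3$ as well), using the fact that every ideal of $\mz[\omega]$ coprime to $3$ has a unique generator congruent to $1 \pmod 3$. In $\mz[\omega]/\pi \cong \mz/p\mz$, the Euler criterion for cubic residues gives $(m/\pi)_3 \equiv m^{(p-1)/3} \pmod p$ for rational $m$ coprime to $p$; this is a nontrivial cubic Dirichlet character modulo $p$, since $3 \mid p-1$ implies $(\mz/p\mz)^*$ contains non-cubes. Its complex conjugate $m \mapsto (m/\bar\pi)_3$ is the other nontrivial cubic character modulo $p$, so these two exhaust them. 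Combining via CRT and multiplicativity of the cubic residue symbol in the lower entry, every primitive cubic character $\chi$ modulo $q$ takes the form $\chi_n(m) = (m/n)_3$ with $n = n_1 \cdots n_r$ and $n_i \in \{\pi_i, \bar\pi_i\}$. Such an $n$ is automatically congruent to $1 \pmod 3$, squarefree in $\mz[\omega]$ (the $n_i$ are distinct primes of $\mz[\omega]$), has no rational prime factor (no inert primes appear, and we never include both of $\pi_i, \bar\pi_i$, so $p_i = \pi_i \bar\pi_i \nmid n$), and satisfies $N(n) = q$.

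The main point requiring care is verifying that $m \mapsto (m/n)_3$, restricted to $\mz$, is a well-defined Dirichlet character of modulus exactly $N(n)$. This reduces to the identity $(n) \cap \mz = N(n)\mz$, which holds because $\pi \mid a \in \mz$ forces $\bar\pi \mid \bar a = a$, hence $p \mid a$; iterating over the distinct $p_i$ gives $(n) \cap \mz = \prod_i p_i \mz = N(n)\mz$. Primitivity is then automatic because each $p_i$-component of $\chi_n$ is a nontrivial character of the cyclic group $(\mz/p_i\mz)^*$. The normalization $n \equiv 1 \pmod 3$ is what removes the sixfold unit ambiguity and makes $n \mapsto \chi_n$ a true parameterization rather than one modulo associates. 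None of the individual steps is deep; the content of the lemma is the combination of these standard facts about cubic residue symbols and the splitting behaviour of primes in $\mz[\omega]$.
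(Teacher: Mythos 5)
Your proposal is correct and follows essentially the same route as the paper: reduce to prime-power conductors, observe that primitive cubic characters exist only for conductor $p\equiv 1\pmod 3$ (exactly two, conjugate to each other), identify them with the residue symbols attached to $\pi$ and $\bar\pi$ where $p=\pi\bar\pi$, and recombine multiplicatively to get the parameterization by squarefree $n\equiv 1\pmod 3$ with no rational prime divisor and $N(n)=q$. Your explicit check that $(n)\cap\mz=N(n)\mz$ is a detail the paper leaves implicit, but it does not change the argument.
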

\noindent This analysis can also be found in \cite{David} e.g. but we shall present this here for completeness.   We refer to Chapter 9 of \cite{IR} for basic properties of cubic residues.

\begin{proof}
To classify the characters we use an approach similar to that of \cite{Davenport}, Chapter 5. By multiplicativity, it suffices to consider the case that $q=p^a$ is a prime power.  It is not hard to show that there is a primitive character of conductor $p$ if and only if $p \equiv 1 \pmod{3}$, in which case there are exactly two such characters $\pmod{p}$, each being the square of the other. If $a \geq 2$ and $p \neq 3$ then there is no primitive character of order $3$, as any character of order $3$ must be induced from one $\mymod{p}$.

These cubic characters are intimately connected with the cubic residue symbol in the ring $\mz[\omega]$. 
If $p \equiv 1 \pmod{3}$ then $p = \pi \overline{\pi}$ with $N(\pi) = p$, and then there is an associated cubic character $l \rightarrow (\frac{l}{\pi})_3$.  This cubic character is defined by the conditions $\leg{l}{\pi}_3 \equiv l^{\frac{N(\pi)-1}{3}} \pmod{\pi}$, with $\leg{l}{\pi}_3 \in \{ 1, \omega, \omega^2 \}$.  It follows directly from the definition that for $l \in \mz$, $(\frac{l}{\overline{\pi}})_3 = \overline{(\frac{l}{\pi})_3}$.  Thus for prime conductor $p$ there is a one-to-one correspondence between primitive Dirichlet characters of order $3$ and conductor $p$, and cubic residue symbols $\chi_{\pi}$ with $N(\pi) = p$.  By multiplicativity we extend this one-to-one correspondence to squarefree $q$ and elements $n$ of $\mz[\omega]$ such that $N(n) = q$.  It is easy to see that $N(n)$ is squarefree (as an element of $\mz$) if and only if $n$ is squarefree (as an element of $\mz[\omega]$) and $n$ has no rational prime divisor.
\end{proof}

Recall that the cubic reciprocity law states that for $m, n \in \mz[\omega]$, $m, n \equiv \pm 1 \pmod{3}$,
\begin{equation*}
 \leg{m}{n}_3 = \leg{n}{m}_3.
\end{equation*}
Some sources state this for $m \equiv n \equiv -1 \pmod{3}$, but the fact that $(-1)^3 = -1$ easily allows for this slight generalization.
The supplement states that if $\pi = 1 + 3a + 3b \omega$, where $a,b \in \mz$, then
\begin{equation*}
 \leg{1-\omega}{\pi}_3 = \omega^{a}.
\end{equation*}
Note $3 = -\omega^2(1-\omega)^2$, and that $N(\pi) = (1+3a)^2 - (1+3a)3b + 9b^2 \equiv 1+3a +3b \pmod{9}$.  Therefore, a simple calculation shows
\begin{equation*}
 \leg{\omega}{\pi}_3 = \omega^{2a + 2b},
\end{equation*}
and hence
\begin{equation*}
 \leg{3}{\pi}_3 = \omega^{b}.
\end{equation*}
It follows easily that for any $n \equiv 1 \pmod{3}, n \in \mz[\omega]$, written in the form $n =  1 + 3c + 3d \omega$, then $(3/n)_3 = \omega^{d}$.  In particular, for $n \equiv 1 \pmod{3}$, we have $(3/n)_3 = 1$ if and only if $n \equiv 1, 4, 7 \pmod{9}$, and in general $(3/n)_3$ only depends on $n \pmod{9}$.  The functions $n \rightarrow ((1-\omega)/n)_3$ and $n \rightarrow (\omega/n)_3$ are ray class characters $\mymod{9}$.

Suppose that $m \in \mz[\omega]$, $m$ not a cube nor a unit.  Then the function
$\psi_m:(n) \rightarrow \leg{m}{n}_3$
defined on ideals $(n) \subset \mz[\omega]$ coprime to $3$, where $n \equiv 1 \pmod{3}$, gives a class group character of modulus $9m$.  
Hence (see Theorem 12.5 of \cite{I}) the Hecke $L$-function
\begin{equation*}
 L(s,\psi_m) = \sum_{(n)} \psi_m((n)) N(n)^{-s} = \sum_{\substack{n \in \mz[\omega] \\ n \equiv 1 \shortmod{3}}} \leg{m}{n}_3 N(n)^{-s}
\end{equation*}
is associated to a weight one cusp form of level $3 N(9m)$ and nebentypus character $\chi(a) = (-3/a) \psi_m((a))$, where $(-3/a)$ is the Kronecker symbol.  

\subsection{On the Gauss sums}
\label{section:Gauss}
It turns out that the Gauss sum associated to the Dirichlet character $\chi_n$ (on $\mz$) defined in Lemma \ref{lemma:cubicclass} is the same one as the corresponding Hecke character (on $\mz[\omega]$).  We now prove this important fact.  Recall the definition of the standard Gauss sum for $n \in \mz[\omega]$, $n \equiv 1 \pmod{3}$ (as in \cite{H-BP} for instance),
\begin{equation*}
 g(n) = \sum_{d \shortmod{n}} \leg{d}{n}_3 \check{e}(d/n), \quad \text{where} \quad \check{e}(z) = \exp(2 \pi i(z + \overline{z})).
\end{equation*}
Our notation differs from \cite{H-BP} as we reserve $e(z)$ for the more standard $\exp(2 \pi i z)$.
Recall that 
$n$ has no rational prime divisor, so $(n, \bar{n}) = 1$. By definition,
\begin{equation*}
 \tau(\chi_n) =  \sum_{1 \leq x \leq N(n)} \left(\frac{x}{n}\right)_3 e^{\frac{2 \pi i x}{N(n)}}.
\end{equation*}
Now write $x \equiv y \bar{n} + \bar{y} n \pmod{n \overline{n}}$, where $y$ varies over a set of representatives in $\mz[\omega] \pmod{n}$, and here $\bar{n}$ is the complex conjugate of $n$.  It is easy to see that as $y$ varies $\pmod{n}$, $x$ varies $\pmod{N(n)}$, using that $\bar{x} = x$ and the Chinese Remainder Theorem.  We then see
\begin{equation*}
 \tau(\chi_n) =  \sum_{y \shortmod{n}} \left(\frac{y \bar{n}}{n}\right)_3 e^{2 \pi i (\frac{y}{n} + \overline{\frac{y}{n}})}.
\end{equation*}
It is a consequence of cubic reciprocity that $(\frac{\bar{n}}{n})_3 = 1$ for $n \equiv 1 \pmod{3}$, so we have
\begin{equation}
\label{eq:taug}
 \tau(\chi_n) = g(n).
\end{equation}

Now we collect some basic facts on the Gauss sums.
It is well-known that
\begin{equation}
\label{eq:gcubed}
 g(n)^3 = \mu(n) N(n) n,
\end{equation}
whence one derives the pleasant fact that $g(n)$ vanishes unless $n$ is squarefree.  

Generalize the definition of the Gauss sum by setting
\begin{equation*}
 g(r,n) = \sum_{x \shortmod{n}} \leg{x}{n}_3 \check{e}\leg{rx}{n}.
\end{equation*}
See \cite{H-BP}, pp.123-124 for the following formulas.  First,
\begin{equation}
\label{eq:gmult}
 g(rs,n) = \overline{\leg{s}{n}_3} g(r,n), \qquad \text{if } (s,n) = 1.
\end{equation}
Furthermore, if $(n_1, n_2) = 1$ then
\begin{equation}
\label{eq:gtwisted}
 g(r,n_1 n_2) = \overline{\leg{n_1}{n_2}_3} g(r,n_1) g(r,n_2)
 = g(n_2 r, n_1) g(r, n_2).
\end{equation}
We also compute, for $\pi$ prime in $\mz[\omega]$, $k \geq 1$,
\begin{equation}
\label{eq:gramanujan}
 g(\pi^2, \pi^k) =
\begin{cases}
	-N(\pi^2), \qquad & k=3, \\
	0, \qquad & \text{otherwise}.
\end{cases}
\end{equation}
In addition, we shall require the fact that
\begin{equation}
\label{eq:gissquarefree}
 g(r,n) = 0, \qquad \text{if } \pi^2 | n, \pi \nmid r,
\end{equation}
which follows immediately from \eqref{eq:gcubed} and \eqref{eq:gmult}.

\begin{mylemma}
\label{lemma:gausssumcalculation}
Suppose $n_1, n_2, \delta \in \mz[\omega]$ are squarefree, $\equiv 1 \pmod{3}$, with norms that are pairwise relatively prime.  
Then
\begin{equation}
\label{eq:gausssumcalculation}
\tau(\chi_{n_1} \overline{\chi_{n_2 \delta}}) = \left(\frac{n_2 \delta}{\overline{n_1}}\right)_3 \left(\frac{\delta}{n_2}\right)_3
\tau(\chi_{n_1}) \tau(\overline{\chi_{n_2}})\tau(\overline{\chi_{\delta}}).
\end{equation}
\end{mylemma}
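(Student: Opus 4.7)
The approach is to transport the Dirichlet Gauss sum on the left into the $\mz[\omega]$ setting via the identification $\tau(\chi_n) = g(n)$ from \eqref{eq:taug}, then iterate the twisted multiplicativity \eqref{eq:gtwisted} twice, and finally tidy up the resulting cubic symbols by conjugation and cubic reciprocity.

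First I would identify the product character on $\mz$. For rational $m$, using $\overline{m}=m$ together with $\overline{\leg{a}{b}_3} = \leg{\overline{a}}{\overline{b}}_3$ (immediate from $\leg{a}{\pi}_3 \equiv a^{(N\pi-1)/3} \pmod\pi$ and $N\overline{\pi}=N\pi$),
\[
\overline{\chi_{n_2\delta}}(m) \;=\; \overline{\leg{m}{n_2\delta}_3} \;=\; \leg{m}{\overline{n_2\delta}}_3,
\]
so $\chi_{n_1}\overline{\chi_{n_2\delta}} = \chi_N$ on $\mz$ with $N := n_1\overline{n_2\delta}$. The element $N$ is $\equiv 1\pmod 3$, squarefree, and carries no rational prime factor (inherited from the three factors together with the pairwise coprimality of norms), so Lemma \ref{lemma:cubicclass} applies and \eqref{eq:taug} gives $\tau(\chi_{n_1}\overline{\chi_{n_2\delta}}) = g(n_1\overline{n_2\delta})$.

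Next I would apply \eqref{eq:gtwisted} with $r=1$ twice: once splitting $n_1$ from $\overline{n_2\delta}$, and once splitting $\overline{n_2}$ from $\overline{\delta}$. The required coprimalities in $\mz[\omega]$ follow from the pairwise coprimality of norms combined with the absence of rational prime factors (a prime $\pi \mid n_1$ is split, and $\overline{\pi}$ cannot divide $n_2\delta$ because $N(\overline{\pi})=N(\pi) \mid N(n_1)$ is coprime to $N(n_2)N(\delta)$). This yields
\[
g(n_1\overline{n_2\delta}) \;=\; \overline{\leg{n_1}{\overline{n_2\delta}}_3}\;\overline{\leg{\overline{n_2}}{\overline{\delta}}_3}\; g(n_1)\, g(\overline{n_2})\, g(\overline{\delta}).
\]
Applying the conjugation identity once more, the two cubic symbols become $\leg{\overline{n_1}}{n_2\delta}_3$ and $\leg{n_2}{\delta}_3$; cubic reciprocity (every relevant element lies in $1+3\mz[\omega]$) then converts them to $\leg{n_2\delta}{\overline{n_1}}_3$ and $\leg{\delta}{n_2}_3$, respectively.

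To finish, I would use $g(n_1) = \tau(\chi_{n_1})$ again, and $g(\overline{n_j}) = \tau(\chi_{\overline{n_j}}) = \tau(\overline{\chi_{n_j}})$ for $n_j \in \{n_2,\delta\}$, where the last equality reuses the Step~1 observation that $\chi_{\overline{n}}$ and $\overline{\chi_n}$ coincide on rational integers. Substituting yields exactly \eqref{eq:gausssumcalculation}. The argument is essentially formal; the only point requiring care is consistently tracking the interaction of complex conjugation with the cubic residue symbol while iterating \eqref{eq:gtwisted} and invoking reciprocity on elements congruent to $1\pmod 3$.
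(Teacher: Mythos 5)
Your proof is correct and follows essentially the same route as the paper: reduce to $\tau(\chi_{n_1\overline{n_2\delta}})=g(n_1\overline{n_2\delta})$ via the conjugation identity and \eqref{eq:taug}, then apply the twisted multiplicativity \eqref{eq:gtwisted} twice and clean up with conjugation and cubic reciprocity. You have merely written out the details that the paper compresses into ``repeatedly using \eqref{eq:taug}, \eqref{eq:gtwisted}, and cubic reciprocity,'' including the (correct) verification of the coprimality and no-rational-prime-divisor conditions.
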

\begin{proof}
The conditions ensure that $\chi_{n_1} \overline{\chi_{n_2 \delta}}$ is a primitive character.
It follows from the definition of the cubic residue symbol that $\overline{\leg{m}{n}}_3 = \leg{\overline{m}}{\overline{n}}_3$, so for $m \in \mz$, $\overline{\chi_n}(m) = \chi_{\overline{n}}(m)$.  Thus $\tau(\chi_{n_1} \overline{\chi_{n_2 \delta}}) = \tau(\chi_{n_1 \overline{n_2 \delta}})$.  Repeatedly using \eqref{eq:taug}, \eqref{eq:gtwisted}, and cubic reciprocity, we get \eqref{eq:gausssumcalculation}.
\end{proof}

\subsection{The approximate functional equation}
Using an approximate functional equation (see Theorem 5.3 of \cite{IK}), we have
\begin{myprop} 
\label{prop:AFE}
Let $\chi$ be an odd primitive Dirichlet character $\chi$ of conductor $q$, and make the following definitions: Let
\begin{equation*}
 V_{\alpha}(x) = \frac{1}{2\pi i} \int_{(1)} \frac{G(s)}{s} g_{\alpha}(s) x^{-s} ds, \quad \text{where} \quad g_{\alpha}(s) = \pi^{-s/2} \frac{\Gamma\left(\tfrac{\frac{3}{2} + \alpha+ s}{2}\right)}{\Gamma\left(\tfrac{\frac{3}{2} + \alpha}{2}\right)}.
\end{equation*}
Furthermore, let $\epsilon(\chi) = i^{-1} q^{-1/2} \tau(\chi)$ be essentially the (normalized) Gauss sum 
and set
\begin{equation*}
 X_{\alpha} = \left(\frac{q}{\pi}\right)^{-\alpha} \frac{\Gamma\left(\tfrac{\frac{3}{2} - \alpha}{2}\right)}{\Gamma\left(\tfrac{\frac{3}{2} + \alpha}{2}\right)}.
\end{equation*}
Finally let $A$ and $B$ be positive real numbers such that $AB = q$.  Then for any $|\text{Re}(\alpha)| < \half$ we have
\begin{equation} \label{approxfunc}
L(\tfrac12 + \alpha, \chi) = \sum_{m=1}^{\infty} \frac{\chi(m)}{m^{\frac12 + \alpha}} V_{\alpha}\left(\frac{m}{A}\right) + \epsilon(\chi) X_{\alpha} \sum_{m=1}^{\infty} \frac{\overline{\chi}(m)}{m^{\frac12 - \alpha}} V_{-\alpha}\left(\frac{m}{B}\right).
\end{equation}
\end{myprop}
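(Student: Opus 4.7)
The plan is to derive \eqref{approxfunc} as a direct specialization of the general approximate functional equation machinery (Theorem 5.3 of \cite{IK}). Since $\chi$ is odd, the completed $L$-function is $\Lambda(s,\chi) = (q/\pi)^{s/2}\Gamma\big(\tfrac{s+1}{2}\big) L(s,\chi)$ and satisfies the functional equation $\Lambda(s,\chi) = \epsilon(\chi)\Lambda(1-s,\overline{\chi})$, with $\epsilon(\chi) = \tau(\chi)/(iq^{1/2})$. The normalization in the definition of $g_\alpha(s)$ has been chosen so that $g_\alpha(0)=1$ and so that $g_\alpha(s)$ is exactly the ratio of the archimedean factor at $\tfrac12+\alpha+s$ to the one at $\tfrac12+\alpha$, up to the $(q/\pi)^{s/2}$ piece, which will ultimately produce the factor $A^s$ (resp.\ $B^s$) that matches $AB=q$.

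I would start from the contour integral
\begin{equation*}
I \;=\; \frac{1}{2\pi i}\int_{(2)} L\big(\tfrac12+\alpha+s,\chi\big)\,g_\alpha(s)\,\frac{G(s)}{s}\,A^s\,ds,
\end{equation*}
where $G$ is an even entire function of rapid decay on vertical lines with $G(0)=1$ (as in the IK formalism). Expanding the $L$-function as a Dirichlet series on $\mathrm{Re}(s)=2$ and integrating termwise gives immediately the first sum $\sum_m \chi(m)m^{-1/2-\alpha}V_\alpha(m/A)$ in \eqref{approxfunc}. Next I shift the contour from $(2)$ to $(-2)$; the convergence on the vertical segments at infinity is controlled by the rapid decay of $G$ (the polynomial growth of $g_\alpha$ from Stirling being harmless), and the only pole encountered in the strip is the simple pole of $1/s$ at $s=0$, whose residue is $L(\tfrac12+\alpha,\chi)$.

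For the resulting integral on $(-2)$, I substitute $s\mapsto -s$ (using $G(-s)=G(s)$) to return to the line $(2)$, and then invoke the functional equation in the form
\begin{equation*}
L\big(\tfrac12+\alpha-s,\chi\big) \;=\; \epsilon(\chi)\,(q/\pi)^{-\alpha+s}\,\frac{\Gamma\big(\tfrac{3/2-\alpha+s}{2}\big)}{\Gamma\big(\tfrac{3/2+\alpha-s}{2}\big)}\,L\big(\tfrac12-\alpha+s,\overline{\chi}\big).
\end{equation*}
A short bookkeeping calculation shows that the $\Gamma$-ratio cancels the $\Gamma$-factor in $g_\alpha(-s)$ and reassembles into $X_\alpha\,g_{-\alpha}(s)$, while the remaining $q^s$ combines with $A^{-s}$ to give $(q/A)^s = B^s$. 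The sign coming from $1/(-s)$ is absorbed by the sign one picks up in writing $L(\tfrac12+\alpha,\chi) = I - J$ (residue theorem with the leftward shift), so the total coefficient in front of the dual sum is precisely $+\epsilon(\chi)X_\alpha$. Expanding $L(\tfrac12-\alpha+s,\overline{\chi})$ as a Dirichlet series and integrating termwise then yields $\sum_m \overline{\chi}(m)m^{-1/2+\alpha}V_{-\alpha}(m/B)$, completing the identity.

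There is no real obstacle here: the only care needed is the bookkeeping of archimedean factors so that the $\Gamma$-ratio $X_\alpha$, the normalization built into $g_\alpha$ and $g_{-\alpha}$, and the split $AB=q$ all reconcile. The hypothesis $|\mathrm{Re}(\alpha)| < \tfrac12$ guarantees that no Gamma poles are crossed during the contour shift and that the Dirichlet series expansions converge on the chosen lines of integration.
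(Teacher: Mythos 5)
Your derivation is correct and is exactly the standard argument underlying Theorem 5.3 of \cite{IK}, which is all the paper itself invokes here (it gives no independent proof of the proposition). The only point worth making precise is that the apparent pole of $g_\alpha(s)$ at $s=-\tfrac{3}{2}-\alpha$, which does lie in the strip $-2<\mathrm{Re}(s)<0$ for $|\mathrm{Re}(\alpha)|<\tfrac{1}{2}$, is cancelled by the corresponding trivial zero of $L(\tfrac12+\alpha+s,\chi)$ (the character being odd), so your contour shift to $(-2)$ picks up only the residue at $s=0$ as claimed.
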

\noindent For $\alpha = 0$ we set $V_0 = V$.

\subsection{Poisson summation}
We shall require two versions of the Poisson summation formula.  Suppose that $w$ is a smooth, compactly-supported function on the positive reals.  

Let $\chi$ be a primitive Dirichlet character of conductor $q$.  Then
\begin{equation}
\label{eq:Poisson1dim}
 \sum_{m \in \mz} w\leg{m}{M} \chi(m) = \frac{M}{q} \tau(\chi) \sum_{h \in \mz} \overline{\chi}(h) \widehat{w}\leg{hM}{q}.
\end{equation}
This is well-known.  For the latter version, we directly quote Lemma 10 of \cite{Hea}.  Let $\chi(m) = \leg{m}{n_1}_3 \overline{\leg{m}{n_2}_3}$ where $n_1$ and $n_2$ are elements of $\mz[\omega]$ that are coprime to each other, and to $3$, and are squarefree.  Then $\chi$ is a primitive character on $\mz[\omega]$ of modulus $n_1 n_2$.
\begin{mylemma} \label{2dpoisson}
 We have
\begin{equation*}
 \sum_{m \in \mz[\omega]} w\leg{N(m)}{M} \chi(m) = \frac{\chi(\sqrt{-3}) g(n_1) \overline{g(n_2)} M}{N(n_1 n_2)} \sum_{k \in \mz[\omega]} \overline{\chi}(k) \check{w}\left(\sqrt{\frac{N(k)}{N(n_1 n_2)} M} \right),
\end{equation*}
where
\begin{equation*}
 \check{w}(t) = \intR \intR w(N(x+y \omega)) \check{e}(t(x+y\omega)/\sqrt{-3}) dx dy.
\end{equation*}
\end{mylemma}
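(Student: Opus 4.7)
The strategy is two-dimensional Poisson summation on the lattice $\mz[\omega]\subset\mathbb{C}$, followed by factorization of the resulting generalized Gauss sum via the Chinese Remainder Theorem and cubic reciprocity.

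Since $\chi$ is periodic modulo $n_1 n_2$, first decompose
\begin{equation*}
\sum_{m \in \mz[\omega]} w\!\left(\tfrac{N(m)}{M}\right) \chi(m) = \sum_{r \shortmod{n_1 n_2}} \chi(r) \sum_{\ell \in \mz[\omega]} W(r + n_1 n_2 \ell),
\end{equation*}
with $W(z) = w(N(z)/M)$, and apply Poisson summation to the inner lattice sum. The lattice $n_1 n_2 \mz[\omega]$ has covolume $N(n_1 n_2)\sqrt{3}/2$, and under the self-duality pairing $(z,\xi)\mapsto\check{e}(z\bar\xi)$ its dual is $(n_1 n_2 \sqrt{-3})^{-1}\mz[\omega]$; the $\sqrt{-3}$ reflects the different of $\mz[\omega]/\mz$. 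Because $W$ is radial, a change of variables $m = x + y\omega$ in the Euclidean Fourier integral identifies the transform of $W$ evaluated at $k/(n_1 n_2 \sqrt{-3})$ with a constant multiple of $M\, \check{w}\!\left(\sqrt{N(k)M/N(n_1 n_2)}\right)$, producing exactly the structural form appearing in the lemma once the constants pair with the covolume.

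Interchanging the two sums produces a generalized cubic Gauss sum
\begin{equation*}
G(k) = \sum_{r \shortmod{n_1 n_2}} \leg{r}{n_1}_3 \overline{\leg{r}{n_2}_3}\, \check{e}\!\left(\frac{kr}{n_1 n_2 \sqrt{-3}}\right).
\end{equation*}
The key arithmetic identity is that $\check{e}(z/\sqrt{-3}) = e^{2\pi i b}$ for $z = a + b\omega \in \mz[\omega]$, a direct computation using $\sqrt{-3} = 1+2\omega$; in particular $\check{e}(z/\sqrt{-3}) = 1$ on $\mz[\omega]$. Choosing $\sigma \in \mz[\omega]$ with $\sigma\sqrt{-3} \equiv 1 \shortmod{n_1 n_2}$ (possible since $(n_1 n_2, 3) = 1$) and substituting $r \to r\sqrt{-3}$ in the summation variable, this identity reduces the exponential to the standard $\check{e}(kr/(n_1 n_2))$ at the cost of the residue-symbol factor $\leg{\sqrt{-3}}{n_1}_3 \overline{\leg{\sqrt{-3}}{n_2}_3} = \chi(\sqrt{-3})$. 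The remaining standard sum splits via CRT, and applying \eqref{eq:gmult} to each single-modulus factor gives $g(n_1)$ and $\overline{g(n_2)}$ times residue symbols in $k, n_1, n_2$; the $k$-symbols combine to $\overline{\chi}(k)$, while the cross symbols $\leg{n_2}{n_1}_3\,\overline{\leg{n_1}{n_2}_3}$ collapse to $1$ by cubic reciprocity (valid since $n_1, n_2 \equiv 1 \shortmod 3$). This yields $G(k) = \chi(\sqrt{-3})\,\overline{\chi}(k)\, g(n_1)\overline{g(n_2)}$, which is exactly the coefficient in the lemma.

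The main technical obstacle is the careful bookkeeping of the $\sqrt{-3}$ factor: it enters unavoidably through the dual lattice of $\mz[\omega]$ in Poisson summation, and it must be absorbed cleanly into the character value $\chi(\sqrt{-3})$ via the special property $\check{e}(z/\sqrt{-3}) = 1$ for $z \in \mz[\omega]$. Once this step is executed correctly, everything else is routine manipulation of cubic residue symbols, and assembling the pieces gives the stated identity.
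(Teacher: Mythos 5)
The paper does not actually prove this lemma: it is quoted verbatim as Lemma 10 of Heath-Brown's \emph{Kummer's conjecture for cubic Gauss sums} \cite{Hea}, so there is no in-paper argument to compare against. Your proposal is a correct, self-contained reconstruction along the standard route: split $m$ into residue classes modulo $n_1n_2$, apply two-dimensional Poisson summation over the sublattice $n_1n_2\mz[\omega]$ whose dual (via the trace pairing $\check{e}$) is $(n_1n_2\sqrt{-3})^{-1}\mz[\omega]$, use radiality of $w(N(\cdot))$ and the scaling $z\mapsto\sqrt{M}z$ to produce the factor $M\,\check{w}\bigl(\sqrt{N(k)M/N(n_1n_2)}\bigr)/N(n_1n_2)$ after cancelling the covolume $N(n_1n_2)\sqrt{3}/2$, and then evaluate the complete sum $G(k)=\sum_{r}\chi(r)\check{e}(kr/(n_1n_2\sqrt{-3}))$. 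Your key identity $\check{e}(z/\sqrt{-3})=e^{2\pi i b}$ for $z=a+b\omega$ is right (it also shows $G(k)$ is well defined on residues mod $n_1n_2$), the substitution $r\mapsto r\sqrt{-3}$ correctly extracts $\chi(\sqrt{-3})$, and the CRT/\eqref{eq:gmult} factorization with cubic reciprocity collapsing $\leg{n_2}{n_1}_3\overline{\leg{n_1}{n_2}_3}$ to $1$ gives $G(k)=\chi(\sqrt{-3})\overline{\chi}(k)g(n_1)\overline{g(n_2)}$ as needed. Two cosmetic points: the pairing you first write, $\check{e}(z\bar\xi)$, is not the one you subsequently use ($\check{e}(z\xi)$); both give the same dual lattice and the same final sum over $k$, but you should use one consistently. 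Also, the identity $\sum_r\chi(r)\check{e}(kr/(n_1n_2))=\overline{\chi}(k)g(n_1)\overline{g(n_2)}$ for $(k,n_1n_2)>1$ requires the primitivity of $\chi$ (both sides vanish), which is guaranteed by the hypotheses on $n_1,n_2$ and worth stating explicitly.
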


\section{Proof of Theorem \ref{thm:mainLvalueresult}}
Here we give the proof of Theorem \ref{thm:mainLvalueresult} though a variant on the bound \eqref{eq:secondmomentHecke} used in a critical way is proved in Section \ref{section:secondmomentproof}.  
We first note that by Lemma \ref{lemma:cubicclass},
\begin{equation*} \label{splitting}
\mathcal{M} := \sum_{(q,3)=1} \ \sumstar_{\substack{\chi \shortmod{q} \\ \chi^3 = \chi_0}} L(\tfrac12, \chi) w\leg{q}{Q}
=
\sumprime_{n \equiv 1 \shortmod{3}} L(\tfrac12, \chi_{n}) w\left(\frac{N(n)}{Q}\right)
\end{equation*}
where the prime indicates the sum runs over squarefree elements $n$ of $\mathbb{Z}[\omega]$ that have no rational prime divisor.  
Applying the approximate functional equation, Proposition \ref{prop:AFE}, with $A_n B = N(n)$ gives $\mathcal{M} = \mathcal{M}_1 + \mathcal{M}_2$, where
\begin{equation*}
 \mathcal{M}_1 = \sumprime_{n \equiv 1 \shortmod{3}}\ \sum_{m=1}^{\infty} \frac{\chi_n(m)}{\sqrt{m}} V\leg{m}{A_n} w\left(\frac{N(n)}{Q}\right)
\end{equation*}
and
\begin{equation*}
 \mathcal{M}_2 = \sumprime_{n \equiv 1 \shortmod{3}} \epsilon(\chi_n) \sum_{m=1}^{\infty} \frac{\overline{\chi}_n(m)}{\sqrt{m}} V\leg{m}{B} w\left(\frac{N(n)}{Q}\right).
\end{equation*}
Define $A$ by $AB=Q$ so that $A_n = A \frac{N(n)}{Q} \asymp A$ for all $n$ under consideration, in view of the support of $w$.

We shall treat $\mathcal{M}_1$ and $\mathcal{M}_2$ with different methods.  The results are summarized with
\begin{mylemma}
 We have
\begin{equation}
\label{eq:M1estimate}
 \mathcal{M}_1 = c Q \widetilde{w}(1) + O(Q^{1/2 + \varepsilon} A^{3/4} + Q A^{-1/6 + \varepsilon}),
\end{equation}
and
\begin{equation}
\label{eq:M2bound}
 \mathcal{M}_2 \ll Q^{5/6} B^{1/6}  + Q^{2/3} B^{5/6}.
\end{equation}
\end{mylemma}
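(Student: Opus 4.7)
The plan is to estimate $\mathcal{M}_1$ and $\mathcal{M}_2$ by independent methods.

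\emph{Treatment of $\mathcal{M}_1$.} First I would interchange the order of summation, obtaining an inner sum over $n$ of $(m/n)_3$ against the weights $V(m/A_n)\,w(N(n)/Q)$, with $n$ squarefree, $\equiv 1 \pmod 3$, and without rational prime divisors. Applying cubic reciprocity together with the supplementary formulas of Section \ref{section:cubicclassification} for $\leg{1-\omega}{\pi}_3$ and $\leg{\omega}{\pi}_3$ converts $(m/n)_3$ into $(n/m)_3 = \psi_m(n)$ up to ray class characters of $n$ modulo $9$. After M\"obius inversion to remove the squarefree and no-rational-prime restrictions, and Mellin inversion on both $V$ and $w$ to separate variables, the $n$-sum becomes a contour integral of the Hecke $L$-function $L(s,\psi_m)$ against rapidly decaying kernels.

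When $m$ is a rational cube, $L(s,\psi_m)$ has a simple pole at $s=1$, and summing the residues over $m = \ell^3$ yields the main term $c Q \widetilde{w}(1)$ with $c$ an explicit Euler product. When $m$ is not a cube, $L(s,\psi_m)$ is entire; shifting the contour to $\operatorname{Re}(s) = \tfrac12$ and applying Cauchy--Schwarz in $m$ combined with the variant \eqref{eq:Heckevariant} of the Hecke second moment bound gives
\begin{equation*}
Q^{1/2} \int_{-\infty}^{\infty} (1+|t|)^{-N} \Big(\sum_{m \leq A^{1+\varepsilon}} \tfrac{1}{m}\Big)^{1/2} \Big(\sumstar_{m \leq A^{1+\varepsilon}} |L(\tfrac12+it,\psi_m)|^2\Big)^{1/2} dt \ll Q^{1/2+\varepsilon} A^{3/4},
\end{equation*}
which accounts for the first error term. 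The second error term $Q A^{-1/6+\varepsilon}$ arises from a finer trade-off in the main-term extraction: completing the sum over cubes $m=\ell^3$ to infinity via the contour and bounding the resulting tail by combining the smoothness of $V$ with the P\'olya--Vinogradov-type estimate $\sum_{N(n)\leq Q}(m/n)_3 \ll N(m)^{1/2+\varepsilon}$ for non-cube $m$.

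\emph{Treatment of $\mathcal{M}_2$.} Using $\epsilon(\chi_n) = i^{-1} N(n)^{-1/2} g(n)$ from \eqref{eq:taug} and the identity $g(n)\overline{(m/n)_3} = g(m,n)$ from \eqref{eq:gmult}, interchanging sums yields
\begin{equation*}
\mathcal{M}_2 = i^{-1} \sum_{m=1}^{\infty} \frac{V(m/B)}{\sqrt{m}} \sumprime_{n \equiv 1 \shortmod 3} \frac{g(m,n)}{\sqrt{N(n)}}\, w\!\left(\frac{N(n)}{Q}\right).
\end{equation*}
I would then apply Lemma \ref{2dpoisson} (two-dimensional Poisson summation on $\mathbb{Z}[\omega]$) to the $n$-sum, after first encoding the arithmetic conditions on $n$ via ray class characters modulo $9$ and M\"obius inversion. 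The dualized sum over a frequency $k \in \mathbb{Z}[\omega]$ features new cubic Gauss sums, controlled either pointwise via Heath-Brown--Patterson bounds for $\sum_{N(n)\leq X} g(m,n)$, or bilinearly by applying the cubic large sieve \eqref{eq:HBcubic} to the $m$-sum. Balancing these two estimates across the range of the dual variable $k$ produces the two terms $Q^{5/6}B^{1/6}$ and $Q^{2/3}B^{5/6}$. The main obstacle will be precisely this balance: the asymmetry between $m \in \mathbb{Z}$ and $n \in \mathbb{Z}[\omega]$ (as highlighted following \eqref{eq:2}) forces a careful split between pointwise and bilinear estimates, and the ray class characters modulo $9$ from the cubic supplementary laws must be tracked cleanly through the Poisson dualization.
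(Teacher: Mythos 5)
Your treatment of $\mathcal{M}_1$ follows the paper's route: M\"obius inversion to remove the squarefree and no-rational-prime-divisor conditions, Mellin inversion to bring in $L(s,\psi_m)$, the pole at $s=1$ for cube $m$ giving the main term, and the bound \eqref{eq:Heckevariant} on the shifted contour giving $Q^{1/2+\varepsilon}A^{3/4}$. One correction there: the error $QA^{-1/6+\varepsilon}$ has nothing to do with a P\'olya--Vinogradov estimate (for cube $m$ the character $\psi_m$ is principal, so there is no character sum to bound). It arises because, after writing $\widetilde f(1)$ as a Mellin integral in a new variable $s$ and summing the residues over cubes, one obtains the Dirichlet series $Z(\tfrac32+3s)$, which converges only for $\text{Re}(s)>-1/6$; the contour can therefore be pushed only to $\text{Re}(s)=-1/6+\varepsilon$, and that line contributes $O(QA^{-1/6+\varepsilon})$.

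The $\mathcal{M}_2$ part has a genuine gap. Lemma \ref{2dpoisson} is Poisson summation for a sum of a \emph{character} $\chi(m)=\leg{m}{n_1}_3\overline{\leg{m}{n_2}}_3$, i.e., a periodic function of the summation variable; the quantity you want to dualize, $n\mapsto g(m,n)$, is a Gauss sum \emph{modulo} $n$ and is not periodic in $n$, so the lemma does not apply, and no elementary Poisson step converts $\sum_n g(m,n)\,w(N(n)/Q)$ into a short dual sum over a frequency $k$. The actual engine is the meromorphic continuation of $\psi(r,s)=\sum_n g(r,n)N(n)^{-s}$ coming from Patterson's cubic metaplectic theory (Lemma \ref{lemma:psi}, quoted from \cite{H-BP} and \cite{P2}): this function has a pole at $s=4/3$ with residue $\ll N(r_1)^{-1/6+\varepsilon}$ and controlled polynomial growth in vertical strips. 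The two terms in \eqref{eq:M2bound} come from, respectively, the residue of that pole (giving $X^{5/6}N(l_1)^{-1/6+\varepsilon}$ for the inner sum, hence $Q^{5/6}B^{1/6}$ after summing trivially over $m\le B$) and the shifted line $\text{Re}(s)=\tfrac12+\varepsilon$ combined with the cutoff in the removal of rational prime divisors of $n$ (giving $Q^{2/3+\varepsilon}N(m)^{1/6}$, hence $Q^{2/3}B^{5/6}$). There is no dual variable to balance over, and the cubic large sieve \eqref{eq:HBcubic} is not used for $\mathcal{M}_2$ at all --- the $m$-sum is estimated trivially. Moreover, a substantial part of the paper's work (Lemmas \ref{lemma:h} and \ref{lemma:laundrylist}) is the reduction of the twisted, coprimality-restricted series $h(r,s)=\sum_{(n,r)=1}\lambda(n)g(r,n)N(n)^{-s}$ to $\psi(r,s)$ via identities for $g(r,n)$; your proposal does not engage with this step, and without it the quoted analytic continuation cannot be invoked.
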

\noindent Choosing $B = Q^{7/19}$, whence $A = Q^{12/19}$  gives Theorem \ref{thm:mainLvalueresult}.  The constant $c$ is given more explicitly in Section \ref{section:mainterm} below.

Our approach for $\mathcal{M}_1$ employs the summation over $n$ to transform the expression into one involving Hecke $L$-functions.  Then we bound this new expression with \eqref{eq:Heckevariant} which is a consequence of Theorem \ref{cubiclargesieve}.  In a previous version of this paper (available on the arxiv), we set up a complicated recursive technique that was later used (with other ingredients) by the second author in a simpler setting \cite{Y2}.
The cancellation in $\mathcal{M}_2$ comes from the sum of cubic Gauss sums, which follows from the work of Patterson showing that these cubic Gauss sums appear as Fourier coefficients of metaplectic Eisenstein series \cite{P}.  See Lemma \ref{lemma:sumofGauss} below for the estimate on the sum of cubic Gauss sums.

\subsection{Evaluating $\mathcal{M}_1$}
\label{section:M1}
First we work on $\mathcal{M}_1$.  We shall detect the condition that $n\equiv 1 \pmod{3}$ has no rational prime divisor using the formula
\begin{equation}
\label{eq:ratmob}
 \sum_{\substack{d |n, d \in \mz \\ d \equiv 1 \shortmod{3}}} \mu_{\mz}(d) =
\begin{cases}
 1, \quad \text{$n$ has no rational prime divisor}, \\
 0, \quad \text{otherwise}.
\end{cases}
\end{equation}
Here we define $\mu_{\mz}(d) = \mu(|d|)$, the usual M\"{o}bius function.  The choice of $d$ up to unit, namely $d \equiv 1 \pmod{3}$ is natural for the arithmetic of the ring $\mz[\omega]$.  We apply this formula and change variables $n \rightarrow dn$ to the sum over $n$.  Since $d$ is squarefree as an element of $\mz[\omega]$, the condition that $dn$ is squarefree then simply means that $n$ is squarefree and $(d,n) = 1$.  Thus
\begin{equation*}
 \mathcal{M}_1 = \sum_{\substack{d \in \mz \\ d \equiv 1 \shortmod{3} }} \mu_{\mz}(d) \sum_{m=1}^{\infty} \frac{\leg{m}{d}_3}{\sqrt{m}} \sumstar_{\substack{n \equiv 1 \shortmod{3} \\ (n,d) = 1}} \leg{m}{n}_3 V\left(\frac{m}{A} \frac{Q}{N(nd)} \right) w\left(\frac{N(nd)}{Q}\right).
\end{equation*}
Now we use M\"{o}bius inversion again (writing $\mu_{\omega}(l)$ for the M\"{o}bius function on $\mz[\omega]$) to detect the condition that $n$ is squarefree, getting
\begin{equation*}
 \mathcal{M}_1 = \sum_{\substack{d \in \mz \\ d \equiv 1 \shortmod{3} }} \mu_{\mz}(d) \sum_{l \equiv 1 \shortmod{3}} \mu_{\omega}(l) \sum_{m=1}^{\infty} \frac{\leg{m}{d l^2}_3}{\sqrt{m}} \mathcal{M}_1(d,l,m),
\end{equation*}
where
\begin{equation*}
 \mathcal{M}_1(d,l,m) = \sum_{\substack{n \equiv 1 \shortmod{3} \\ (n,d) = 1}} \leg{m}{n}_3 V\left(\frac{m}{A} \frac{Q}{N(ndl^2)} \right) w\left(\frac{N(ndl^2)}{Q}\right).
\end{equation*}
Next we use the Mellin transform of the weight function to express the sum over $n$ as a contour integral involving the Hecke $L$-function.  By Mellin inversion,
\begin{equation*}
 V\left(\frac{m}{A} \frac{Q}{N(ndl^2)} \right) w\left(\frac{N(ndl^2)}{Q}\right) = \frac{1}{2 \pi i} \int_{(2)} \leg{Q}{N(ndl^2)}^s \widetilde{f}(s) ds,
\end{equation*}
where
\begin{equation*}
\widetilde{f}(s) = \int_0^{\infty} V\left(\frac{m}{A} x\right) w(x) x^{s-1} dx.
\end{equation*}
Integration by parts shows $\widetilde{f}(s)$ is a function satisfying the bound for all $\text{Re}(s) \geq \frac14$
\begin{equation*}
 \widetilde{f}(s) \ll (1 + |s|)^{-100} (1 + m/A)^{-100}.
\end{equation*}
With this notation, and with the definition \eqref{eq:HeckeL}, then
\begin{equation*}
 \mathcal{M}_1(d,l,m) = \frac{1}{2 \pi i} \int_{(2)} \leg{Q}{N(dl^2)}^s L(s, \psi_m) \widetilde{f}(s) ds.
\end{equation*}
We estimate $\mathcal{M}_1$ by moving the contour to the half line.  When $m$ is a cube the Hecke $L$-function has a pole at $s=1$.  We set $\mathcal{M}_0$ to be the contribution to $\mathcal{M}_1$ of these residues, and $\mathcal{M}_1'$ to be the remainder.  We shall defer the analysis of $\mathcal{M}_0$ to Section 
\ref{section:mainterm}.

By bounding everything with absolute values, we see that
\begin{equation*}
|\mathcal{M}_1'| \ll  \sum_{d \ll \sqrt{Q}} \sum_{N(l) \ll \sqrt{Q}} \frac{1}{\sqrt{N(dl^2)}} \sum_{m} \frac{\sqrt{Q}}{\sqrt{m}} (1+m/A)^{-100} \intR |L(\tfrac12 + it, \psi_m)| (1+|t|)^{-100} dt.
\end{equation*}
Since $d \in \mz$, then $N(d) = d^2$ so that the sums over $d$ and $l$ are easily computed.
Finally we use the estimate \eqref{eq:Heckevariant}, which is a close relative to \eqref{eq:secondmomentHecke}, to bound the sum over $m$.
Putting everything together, we obtain
\begin{equation}
\label{eq:M1'}
|\mathcal{M}_1'| \ll Q^{1/2 + \varepsilon} A^{3/4},
\end{equation}
In Section \ref{section:mainterm} we show $\mathcal{M}_0 = cQ \widetilde{w}(1) + O(QA^{-1/6 + \varepsilon})$ which combined with \eqref{eq:M1'} gives \eqref{eq:M1estimate}.

\subsection{Computing $\mathcal{M}_0$}
\label{section:mainterm}
Recall 
that
\begin{equation*}
 \mathcal{M}_0 = \sum_{\substack{d \in \mz \\ d \equiv 1 \shortmod{3} }} \mu_{\mz}(d) \sum_{l \equiv 1 \shortmod{3}} \mu_{\omega}(l) \sum_{m=1}^{\infty} \frac{\leg{m}{d l^2}_3}{\sqrt{m}} \frac{Q}{N(dl^2)} \widetilde{f}(1) \text{Res}_{s=1} L(s, \psi_m),
\end{equation*}
where using the Mellin convolution formula shows
\begin{equation*}
 \widetilde{f}(1) = \int_0^{\infty} V\left(\frac{m}{A} x\right) w(x) dx = \frac{1}{2 \pi i} \int_{(1)} \leg{A}{m}^s \widetilde{w}(1-s) \frac{G(s)}{s} g(s) ds.
\end{equation*}	
From the discussion in Section \ref{section:cubicclassification}, it is not difficult to see that $\psi_m$ is the principal character only if $m$ is a cube, in which case
\begin{equation*}
 L(s, \psi_m) = \zeta_{\mq(\omega)}(s) \prod_{\pi | 3m} (1 - N(\pi)^{-s}),
\end{equation*}
and $\zeta_K(s)$ is the Dedekind zeta function for the field $K$.  Let $c_\omega = \frac{2 \pi}{6 \sqrt{3}}$ be the residue of $\zeta_{\mq(\omega)}(s)$ at $s=1$, evaluated using the Kronecker limit formula.  Then
\begin{equation*}
 \mathcal{M}_0 = c_\omega Q \sum_{m=1}^{\infty} \frac{\widetilde{f}(1)}{m^{3/2}} \prod_{\pi | 3m} (1- N(\pi)^{-1})  \sum_{\substack{d \in \mz, (d,m) =1\\ d \equiv 1 \shortmod{3} }} \frac{\mu_{\mz}(d)}{d^2} \sum_{\substack{(l,m)=1 \\ l \equiv 1 \shortmod{3}}} \frac{\mu_{\omega}(l)}{N(l^2)}.
\end{equation*}
Computing the sums over $d$ and $l$ explicitly, we obtain
\begin{equation*}
 \mathcal{M}_0 = c_\omega Q \sum_{m=1}^{\infty} \frac{\widetilde{f}(1)}{m^{3/2}} \prod_{\pi | 3m} (1- N(\pi)^{-1})  \prod_{p \nmid 3m} (1-p^{-2}) \prod_{\pi \nmid 3m} (1- N(\pi)^{-2}).
\end{equation*}
The two products over $\pi$ combine rather nicely to give
\begin{equation*}
 \mathcal{M}_0 = c_\omega  \zeta^{-1}_{\mq(\omega)}(2) \zeta^{-1}(2) Q \sum_{m=1}^{\infty} \frac{\widetilde{f}(1)}{m^{3/2}}   \prod_{\pi | 3m} (1+ N(\pi)^{-1})^{-1} \prod_{p | 3m} (1-p^{-2})^{-1}.
\end{equation*}
Let
\begin{equation*}
 Z(u) = \sum_{m=1}^{\infty} m^{-u} \prod_{\pi | 3m} (1+ N(\pi)^{-1})^{-1} \prod_{p | 3m} (1-p^{-2})^{-1},
\end{equation*}
which is holomorphic and bounded for $\text{Re}(u) \geq 1 + \delta > 1$.  Then
\begin{equation*}
 \mathcal{M}_0 = c_\omega  \zeta^{-1}_{\mq(\omega)}(2) \zeta^{-1}(2) Q \frac{1}{2 \pi i} \int_{(1)} A^s Z(\tfrac32 + 3s) \widetilde{w}(1-s) \frac{G(s)}{s} g(s) ds.
\end{equation*}
We move the contour of integration to $-1/6 + \varepsilon$, crossing a pole at $s=0$ only.  The new contour contributes $O(A^{-1/6 + \varepsilon} Q)$, while the pole at $s=0$ gives
\begin{equation}
\label{eq:c}
c Q \widetilde{w}(1), \quad \text{where} \quad c= c_\omega  \zeta^{-1}_{\mq(\omega)}(2) \zeta^{-1}(2) Z(3/2).
\end{equation}
Note that $Z(u)$ converges absolutely at $u=3/2$ so it is easy to express $Z(3/2)$ explicitly as an Euler product, if desired.

\subsection{Estimating $\mathcal{M}_2$}
\label{section:M2}
Using the calculation $\epsilon(\chi_n) = i^{-1} g(n) N(n)^{-\half}$, we have
\begin{equation*}
\label{eq:M2recall}
 \mathcal{M}_2 = i^{-1} \sum_{m=1}^{\infty} \frac{1}{m^{\tfrac12}} V\left(\frac{m}{B}\right) \sumprime_{\substack{ n\equiv 1 \shortmod{3}}} \frac{\overline{\chi}_{n}(m) g(n)}{\sqrt{N(n)}} w\left(\frac{N(n)}{Q}\right) .
\end{equation*}
In this section we show
\begin{mylemma}
\label{lemma:sumofGauss}
 For any $m \in \mz[\omega]$, write $m = m_0 m_1$ where $m_0$ is a unit times a power of $1-\omega$ and $m_1 \equiv 1 \pmod{3}$.  Then we have
\begin{equation*}
\label{eq:sumofGausssums}
H'(n,Q):=\sumprime_{\substack{n \in \mz[\omega] \\ n\equiv 1 \shortmod{3}}} \frac{\overline{\chi}_n(m) g(n) }{\sqrt{N(n)}} w\left(\frac{N(n)}{Q}\right) \ll  Q^{2/3 + \varepsilon} N(m)^{1/6} + Q^{5/6} N(m_1)^{-1/6 + \varepsilon}.
\end{equation*}
\end{mylemma}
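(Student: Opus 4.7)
The plan is to rewrite $H'(m,Q)$ as a smoothly weighted sum of generalized cubic Gauss sums $g(r,n)$ over $\mz[\omega]$ and then invoke Patterson's estimates for such sums, which come from the Fourier coefficients of the cubic metaplectic theta function and Eisenstein series (see \cite{P}, and for twisted variants \cite{H-BP}).

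First I would use the identity $\overline{\chi_n}(m)\, g(n) = g(m, n)$, valid for $(m,n) = 1$, which follows directly from \eqref{eq:gmult}. This recasts the summand as $g(m, n)/\sqrt{N(n)}$ weighted by $w(N(n)/Q)$. Next I would detect the constraint that $n$ has no rational prime divisor via the identity \eqref{eq:ratmob}, introducing a Möbius sum over $d \in \mz$ with $d \equiv 1 \pmod{3}$. Substituting $n \to dn$ and applying \eqref{eq:gtwisted}, \eqref{eq:gmult}, together with cubic reciprocity, produces the factorization $g(m, dn) = \overline{\leg{n}{d}}_3 g(m, d)\, g(m, n)$ whenever $(d, n) = 1$. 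The remaining coprimality condition $(n, md) = 1$ can be stripped by a second Möbius sum over $l \in \mz[\omega]$ with $l \equiv 1 \pmod{3}$, while the squarefreeness of $n$ is automatic from \eqref{eq:gissquarefree} once the coprimality is imposed.

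The problem thus reduces to estimating inner sums of the form
\begin{equation*}
S(r, Y) = \sum_{n \equiv 1 \shortmod{3}} g(r, n)\, W\!\left(\frac{N(n)}{Y}\right),
\end{equation*}
for smooth compactly supported $W$, parameters $r$ built from $m$, $d$, and $l$, and scales $Y \asymp Q/N(dl^2)$. Patterson's theorem in its twisted form gives an estimate of shape $S(r, Y) \ll Y^{5/6 + \varepsilon} N(r_1)^{-1/6 + \varepsilon}$, where $r_1$ denotes the part of $r$ coprime to $3$; the factor $N(r_1)^{-1/6}$ encodes the oscillation in the Kubota symbol and is precisely what produces the $N(m_1)^{-1/6+\varepsilon}$ decay in the final bound. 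Combining this with the trivial bound $|g(r,n)| \leq N(n)^{1/2}$, which yields $S(r,Y) \ll Y^{1+\varepsilon}$, and summing the resulting contributions over the Möbius parameters $d, l$, I expect the $Q^{5/6} N(m_1)^{-1/6+\varepsilon}$ term to emerge from Patterson's bound in the generic regime and the $Q^{2/3+\varepsilon} N(m)^{1/6}$ term to arise from the exceptional ranges (small $Y$, or $r_1$ close to a cube in $\mz[\omega]$, where the Patterson main term cannot be ignored).

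The main obstacle will be tracking the $m$-dependence carefully through the decomposition $m = m_0 m_1$: the cleanest Patterson-type statements apply when $r \equiv 1 \pmod{9}$, so one must quantify how twists by units and by powers of $1-\omega$ modify the main term versus the error, and in particular ensure that the main terms which arise when $r_1$ happens to be a cube (contributing $Y^{5/6}$ without the $N(r_1)^{-1/6}$ savings) are dominated by $Q^{2/3+\varepsilon} N(m)^{1/6}$ after the Möbius convolutions over $d$ and $l$.
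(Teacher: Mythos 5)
Your overall route is the same as the paper's: the identity $\overline{\chi_n}(m)g(n)=g(m,n)$ from \eqref{eq:gmult}, M\"obius inversion via \eqref{eq:ratmob} to strip the ``no rational prime divisor'' condition, reduction to smoothed sums of twisted Gauss sums, and an appeal to the Patterson and Heath-Brown--Patterson theory of $\psi(r,s)=\sum_n g(r,n)N(n)^{-s}$. The gap is in the estimate you attribute to that theory. The bound $S(r,Y)\ll Y^{5/6+\varepsilon}N(r_1)^{-1/6+\varepsilon}$ is only the contribution of the pole of $\psi(r,s)$ at $s=4/3$. Lemma \ref{lemma:psi} (from \cite{H-BP} and \cite{P2}) continues $\psi(r,s)$ only down to $\mathrm{Re}(s)=1+\varepsilon$, with growth $\ll N(r)^{1/4}(1+t^2)^{1/2}$ on that line, so shifting the contour leaves an additional term of size $Y^{1/2+\varepsilon}N(r)^{1/4}$ that \emph{grows} with $N(r)$; this is precisely the first term of the paper's Lemma \ref{lemma:Hbound}. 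Your stated bound cannot hold as written: for $N(r)\gg Y^{2+\delta}$ it would force $|S(r,Y)|=o(Y^{1/2})$, which is inconsistent with the mean-square lower bound $\sum_{N(r)\le R}|S(r,Y)|^2\gg RY$ coming from orthogonality; and if it did hold, combining it with the trivial bound and summing over the M\"obius parameters would give $H'\ll Q^{5/6}N(m_1)^{-1/6+\varepsilon}$ with no first term at all, which is stronger than the lemma claims and false.

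Consequently your account of where $Q^{2/3+\varepsilon}N(m)^{1/6}$ comes from is also off: neither ``small $Y$'' nor ``$r_1$ close to a cube'' can produce a bound that \emph{increases} with $N(m)$. In the paper this term arises from the $Y^{1/2+\varepsilon}N(r)^{1/4}$ contribution just described: one truncates the rational M\"obius sum at $|d|\le Y_0$, uses $H(dm,Q/d^2)\ll (Q/d^2)^{1/2+\varepsilon}N(dm)^{1/4}+(Q/d^2)^{5/6}N(m_1)^{-1/6+\varepsilon}$ there and the trivial bound $Q/d^2$ for $|d|>Y_0$, and optimizes $Y_0=Q^{1/3}N(m)^{-1/6}$ by balancing $Q^{1/2+\varepsilon}Y_0^{1/2}N(m)^{1/4}$ against $QY_0^{-1}$. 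Your remaining concerns --- the twists by units and powers of $1-\omega$ (which become a ray class character modulo $9$) and the removal of the coprimality condition $(n,r)=1$ so that the results of \cite{H-BP} and \cite{P2} apply --- are genuine but technical; the paper devotes Lemmas \ref{lemma:h} and \ref{lemma:laundrylist} to exactly this bookkeeping, and you would need something equivalent before quoting the literature.
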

By summing trivially over $m$ one easily deduces \eqref{eq:M2bound}.
Recall that the prime on the sum over $n$ indicates that the sum is restricted to squarefree numbers having no rational prime divisor.  This feature causes some difficulties.

Our first move in the proof of Lemma \ref{lemma:sumofGauss} is to use
M\"{o}bius inversion, i.e., \eqref{eq:ratmob}, to remove the condition that $n$ has no rational prime divisor.  We simplify the resulting expression using the the identity $g(dn) = g(d) g(n) \overline{\chi}_n(d)$ following from \eqref{eq:gtwisted}, the fact that $g(n) = 0$ unless $n$ is squarefree, and using the notation $\widetilde{g}(c) = g(c) N(c)^{-\half}$.
This gives
\begin{equation*}
\label{eq:M2sieved}
 H'(n,Q) = \sum_{\substack{d \in \mz \\ d \equiv 1 \shortmod{3}}} \mu_{\mz}(d) \tilde{g}(d) H(dm, Q/d^2),
\end{equation*}
where
\begin{equation*}
H(dm, X) = \sum_{\substack{n \in \mz[\omega] \\ n \equiv 1 \shortmod{3}}}  \frac{\overline{\chi}_{n}(dm) g(n)}{N(n)^{\frac12}} w\left(\frac{N(n)}{X}\right).
\end{equation*}
We estimate $H$ with the following
\begin{mylemma}
\label{lemma:Hbound}
 For any $l \in \mz[\omega]$, write $l = l_0 l_1$ where $l_0$ is a unit times a power of $1-w$, and $l_1 \equiv 1 \pmod{3}$.  Then we have
\begin{equation*}
\label{eq:Hbound}
 H(l, X) \ll X^{1/2 + \varepsilon} N(l_1)^{1/4} + X^{5/6} N(l_1)^{-1/6 + \varepsilon}.
\end{equation*}
\end{mylemma}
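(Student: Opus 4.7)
The plan is to interpret $H(l, X)$ as a smoothly weighted sum of twisted cubic Gauss sums $g(l, n)$, realize its associated Dirichlet series as a Patterson-type object with meromorphic continuation coming from the cubic metaplectic Eisenstein series, and shift the contour to extract a main term of size $X^{5/6} N(l_1)^{-1/6 + \varepsilon}$ from a pole and an error $X^{1/2 + \varepsilon} N(l_1)^{1/4}$ from the shifted line.

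First I would rewrite the summand in its cleanest form. Using \eqref{eq:gmult} together with the vanishing \eqref{eq:gissquarefree}, and the fact that $\overline{\chi}_n(l) = 0$ when $(n, l) > 1$, one obtains
$$
H(l, X) = \sum_{\substack{n \equiv 1 \shortmod{3} \\ (n, l) = 1}} \frac{g(l, n)}{\sqrt{N(n)}} \, w\!\leg{N(n)}{X},
$$
with the squarefree condition on $n$ now automatic from the factor $g(l,n)$. Mellin inversion then gives the representation
$$
H(l, X) = \frac{1}{2 \pi i} \int_{(\sigma)} X^s \, \widetilde{w}(s) \, \Theta(s + \tfrac{1}{2}, l) \, ds, \qquad \Theta(s, l) := \sum_{\substack{n \equiv 1 \shortmod{3} \\ (n, l) = 1}} \frac{g(l, n)}{N(n)^s},
$$
which is absolutely convergent for $\sigma > 1$ by the Weil bound on $|g(l, n)|$.

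Next I would invoke Patterson's theory \cite{P}: identifying $\Theta(\cdot, l)$ with an $l$-th Fourier coefficient of the cubic metaplectic Eisenstein series, $\Theta(s, l)$ extends meromorphically to $\mathbb{C}$ with a single relevant simple pole at $s = 4/3$ whose residue satisfies $\text{Res}_{s = 4/3} \Theta(s, l) \ll N(l_1)^{-1/6 + \varepsilon}$, while the factor $l_0$ affects only a bounded ramified Euler factor at $1 - \omega$. Patterson's functional equation combined with Phragm\'{e}n--Lindel\"{o}f interpolation should also yield a convexity bound of the shape $\Theta(1 + \varepsilon + it, l) \ll N(l_1)^{1/4 + \varepsilon} (1 + |t|)^{O(1)}$. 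I would then shift the contour from $\sigma$ large down to $\sigma = 1/2 + \varepsilon$, crossing the simple pole at $s = 5/6$ (where $s + \tfrac{1}{2} = 4/3$); its residue contributes the second term $X^{5/6} N(l_1)^{-1/6 + \varepsilon}$, while the shifted integral, bounded via the convexity estimate together with the rapid decay of $\widetilde{w}$, contributes the first term $X^{1/2 + \varepsilon} N(l_1)^{1/4}$.

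The main technical obstacle will be tracking the precise $l_1$-dependence of both the residue and the functional equation within Patterson's formulas, which are naturally phrased in the language of Kubota symbols and metaplectic forms; both the $-1/6$ exponent in the residue and the $1/4$ convexity exponent should emerge only after careful computation of local factors at the primes dividing $l_1$. The $l_0$ part contributes only a bounded local factor at $1 - \omega$, so the splitting $l = l_0 l_1$ in the statement cleanly isolates the arithmetically relevant quantity $N(l_1)$; once the residue bound and the convexity estimate are in place with explicit $l_1$-dependence, the contour shift is routine.
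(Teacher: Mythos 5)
Your overall strategy is exactly the paper's: write $H(l,X)$ as a Mellin integral against a Dirichlet series of twisted cubic Gauss sums, import the meromorphic continuation, the pole at $s=4/3$ with residue $\ll N(l_1)^{-1/6+\varepsilon}$, and the growth bound $\ll N(l_1)^{1/4}$ near $\mathrm{Re}(s)=1$ from the metaplectic theory of Patterson and Heath--Brown--Patterson, then shift to $\mathrm{Re}(s)=\tfrac12+\varepsilon$ picking up the pole at $s=5/6$. The exponents you predict for the residue and the convexity bound are the ones the paper actually uses (Lemma \ref{lemma:psi}, quoting Lemma 4 of \cite{H-BP} and Theorems 8.1, 9.1 of \cite{P2}).

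There is, however, one concrete gap. You assert that $\Theta(s,l)=\sum_{(n,l)=1} g(l,n)N(n)^{-s}$ ``is'' an $l$-th Fourier coefficient of the cubic metaplectic Eisenstein series. It is not: the Fourier coefficient, and hence the object for which the literature supplies continuation, pole location, residue bound and growth, is the \emph{unrestricted} series $\psi(l,s)=\sum_{n\equiv 1\,(3)} g(l,n)N(n)^{-s}$. The two differ precisely in the terms with $(n,l_1)>1$, and these do not vanish in general: for instance $g(\pi^2,\pi^3)=-N(\pi)^2$ by \eqref{eq:gramanujan}, so non-squarefree $n$ divisible by primes of $l_1$ contribute to $\psi$ but are excluded from your $\Theta$. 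Transferring the analytic properties from $\psi$ to the coprimality-restricted series is exactly what the paper's Lemma \ref{lemma:laundrylist} does, via the inclusion--exclusion identities \eqref{eq:3.6.1}--\eqref{eq:3.6.4}, which peel off finite Euler factors of the shape $(1-\lambda(\pi)^3N(\pi)^{2-3s})^{-1}$ and finite correction sums over divisors of $l_1$; one must also check that these factors do not spoil the $N(l_1)^{-1/6+\varepsilon}$ residue bound or the $N(l_1)^{1/4}$ growth. (A second, minor point handled correctly but implicitly in your write-up: the unit-times-$(1-\omega)$-power part $l_0$ enters as a ray class character $\lambda\pmod 9$ twisting the series, and the literature results must be quoted in the form that allows this twist, as in \cite{P2}.) So you have the right skeleton, but the step ``Patterson's theory applies to $\Theta(s,l)$'' needs to be replaced by an explicit reduction of the coprime-restricted series to the unrestricted one; this is where most of the actual work in the paper's proof lies.
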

Before proving Lemma \ref{lemma:Hbound}, we show how Lemma \ref{lemma:sumofGauss} follows from it.  We treat $|d| \leq Y$ and $|d| > Y$ separately, where $Y$ is a parameter to be chosen.  For $|d| \leq Y$ we use Lemma \ref{lemma:Hbound}, while for $|d| > Y$ we use the trivial bound $H(l,X) \ll X$.  Thus, writing $m = m_0 m_1$ where $m_0$ is a power of $3$ and $m_1$ is coprime to $3$, we have
\begin{equation*}
 H'(n,Q) \ll \sum_{|d| \leq Y} \leg{Q}{d^2}^{1/2+\varepsilon} N(dm)^{1/4} + \sum_{|d| \leq Y} \leg{Q}{d^2}^{5/6} N(m_1)^{-1/6+\varepsilon} + \sum_{|d| > Y} \frac{Q}{d^2},
\end{equation*}
which simplifies as
\begin{equation*}
 H'(n,Q) \ll Q^{1/2 + \varepsilon} \sqrt{Y} N(m)^{1/4} + Q Y^{-1} + Q^{5/6} N(m_1)^{-1/6 + \varepsilon}.
\end{equation*}
Optimally choosing $Y = Q^{1/3} N(m)^{-1/6}$ gives Lemma \ref{lemma:sumofGauss}.

\begin{proof}[Proof of Lemma \ref{lemma:Hbound}]
The difficulty in estimating $H(l,X)$ is apparently a technicality: the sum is not quite in a form that allows us to quote from the literature, in particular Section 4 of \cite{H-BP}.  Our goal is to manipulate $H(l,X)$ until it meets these conditions.  Before elaborating on this discussion we first do some minor simplifications that ease the comparison to the literature.

In this section we use the convention that all sums over elements of $\mz[\omega]$ are restricted to elements $\equiv 1 \pmod{3}$.  Writing $l = l_0 l_1$ as above, and
using cubic reciprocity, we see that
\begin{equation*}
\overline{\chi}_n(l) := \overline{\leg{l}{n}}_3 = \overline{\leg{n}{l_1}}_3 \overline{\leg{l_0}{n}}_3. 
\end{equation*}
From the discussion in Section \ref{section:cubicclassification}, the function $\lambda(n) = \overline{\leg{l_0}{n}}_3$ is a ray class character $\pmod{9}$.  
Thus
\begin{equation*}
H(l,X) = \sum_{\substack{ n \in \mz[\omega]}}  \frac{\lambda(n) \overline{\leg{n}{l_1}}_3 g(n)}{N(n)^{\frac12}} w\left(\frac{N(n)}{X}\right).
\end{equation*}
Note that the identity \eqref{eq:gmult} implies $\overline{\leg{n}{l_1}} g(n) = g(l_1, n)$ for $(n, l_1) = 1$.  Letting
\begin{equation*}
 h(r,s) = \sum_{(n,r) = 1} \frac{\lambda(n) g(r,n)}{N(n)^s},
\end{equation*}
and introducing the Mellin transform of $w$, we get
\begin{equation}
\label{eq:HlX}
 H(l,X) =   
\frac{1}{2 \pi i} \int_{(2)} \widetilde{w}(s) X^s h(\tfrac12 + s, l_1) ds.
\end{equation}
Clearly the series defining $h(r,s)$ converges absolutely and uniformly on any region $\text{Re}(s) \geq \frac32 + \delta > \frac32$.  We need to know the analytic behavior of $h(r,s)$, i.e., meromorphic continuation, location of poles, and order of growth.  In Section 4 of \cite{H-BP} these properties are explicitly given but for slightly different functions, such as
\begin{equation*}
 \psi(r,s) = \sum_n \frac{g(r,n)}{N(n)^{s}}, \quad \psi_{\alpha}(r,s) = \sum_{n \equiv 0 \shortmod{\alpha}} \frac{g(r,n)}{N(n)^{s}}, \quad \widetilde{\psi}_{\alpha}(r,s) = \sum_{(n,\alpha) = 1} \frac{g(r,n)}{N(n)^s}.
\end{equation*}
Precisely, with the following we summarize results from Lemma 4 of \cite{H-BP}, and Theorems 9.1 and 8.1 of \cite{P2}.
\begin{mylemma}
\label{lemma:psi}
 The function $\psi(r,s)$ has meromorphic continutation to the complex plane.  It is holomorphic in the region $\text{Re}(s) > 1$ except possibly for a pole at $s=4/3$.  Furthermore, letting $\sigma_1 = \frac32 + \varepsilon$, and $\sigma_1 \geq \sigma \geq \sigma_1 - \frac12$, $|s-\frac43| > \frac{1}{12}$, we have
\begin{equation*}
 \psi(r,s) \ll N(r)^{\frac12 (\sigma_1 - \sigma)} (1 + t^2)^{\sigma_1 - \sigma}.
\end{equation*}
If $r= r_1 r_2^2$ is cubefree, then the residue satisfies
\begin{equation*}
\text{res}_{s=4/3} \psi(r,s) \ll N(r_1)^{-1/6+\varepsilon}.
\end{equation*}
\end{mylemma}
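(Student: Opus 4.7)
The statement is a repackaging of deep results on the Kubota--Patterson cubic Dirichlet series $\psi(r,s)$, so the proof is essentially a translation of known theorems rather than a fresh argument. The plan is to invoke three facts in sequence. First, I would identify $\psi(r,s)$ with (essentially) an $r$-th Fourier coefficient of a cubic metaplectic Eisenstein series on the $3$-fold cover of $GL_2$ over $\mathbb{Q}(\omega)$. This is the content of Lemma 4 of \cite{H-BP}: the Dirichlet series $\psi(r,s)$ inherits meromorphic continuation to $\mathbb{C}$ from the Eisenstein series, and its only pole in $\text{Re}(s) > 1$ is the image of the Eisenstein pole at $s = 4/3$.

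Second, for the convexity-type bound, I would use the functional equation $\psi(r,s) \leftrightarrow \psi(r, 2-s)$ (with the appropriate gamma factors and scattering matrix) derived from the Eisenstein functional equation. On the line $\text{Re}(s) = \sigma_1 = \tfrac32 + \varepsilon$ the Dirichlet series converges absolutely, giving $\psi(r,s) = O(1)$; the functional equation then transports this to an $N(r)^{1/2}(1+t^2)$-type bound on the reflected line. A Phragm\'en--Lindel\"of interpolation across the strip, carried out outside a small disk around the pole at $s=4/3$, yields the stated hybrid bound $N(r)^{(\sigma_1 - \sigma)/2}(1+t^2)^{\sigma_1 - \sigma}$. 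This is essentially Theorem 8.1 of \cite{P2}.

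Third, for the residue bound I would quote Theorem 9.1 of \cite{P2}: up to elementary local factors at the ramified prime $1-\omega$ and at the cubefull part of $r$, the residue of $\psi(r,s)$ at $s = 4/3$ is proportional to the normalized cubic Gauss sum $g(r_1)/N(r_1)^{2/3}$, where $r = r_1 r_2^2$ with $r_1$ squarefree. Patterson's non-trivial bound $|g(r_1)| \ll N(r_1)^{1/2 + \varepsilon}$ for squarefree $r_1 \equiv 1 \pmod 3$ then yields the claimed estimate $\ll N(r_1)^{-1/6 + \varepsilon}$.

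The main obstacle is bookkeeping rather than analysis: the three cited references use slightly different normalizations (choice of unit representatives, treatment of the ramified prime $1-\omega$, placement of the critical line), so care is required to verify that our $\psi(r,s)$, summed over $n \equiv 1 \pmod 3$, matches their conventions exactly and that the residue computation is transported correctly. All of the genuine analytic depth --- meromorphic continuation, functional equation, and Patterson's bound on cubic Gauss sums --- is imported wholesale from the metaplectic theory, and no substantively new estimate is produced in this lemma.
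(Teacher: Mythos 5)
Your proposal matches the paper's own treatment: the paper gives no independent proof of this lemma, stating only that it ``summarizes results from Lemma 4 of \cite{H-BP}, and Theorems 9.1 and 8.1 of \cite{P2},'' which are exactly the three facts you import (continuation and pole location from the metaplectic Eisenstein series, the convexity bound via the functional equation and Phragm\'en--Lindel\"of, and the residue formula). The only quibble is that the residue estimate requires no ``non-trivial'' Gauss sum bound: for squarefree $r_1$ the identity $g(r_1)^3=\mu(r_1)N(r_1)r_1$ gives $|g(r_1)|=N(r_1)^{1/2}$ exactly, whence $N(r_1)^{1/2-2/3}=N(r_1)^{-1/6}$.
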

Note $h(r,s)$ differs from $\psi(r, s)$ only in the additional presence of the ray class character $\lambda$, and the coprimality condition $(n,r) = 1$.  The presence of $\lambda$ is unimportant, but removing the condition $(n,r) = 1$ unfortunately seems to require some elaborate gyrations.

\begin{mylemma}
\label{lemma:h}
Lemma \ref{lemma:psi} holds with $\psi(r,s)$ replaced by $h(r,s)$.
\end{mylemma}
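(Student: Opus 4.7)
The plan is to reduce $h(r,s)$ to a bounded linear combination of twisted $\psi$-type sums and invoke Lemma \ref{lemma:psi} on each piece. The ray class character $\lambda$ has conductor $9$, a fixed constant; Patterson's construction of the cubic theta and Eisenstein series (\cite{P2}, Sections~8--9) permits twisting by any Hecke character of bounded conductor without affecting the quoted bounds, so $\lambda$ by itself causes no trouble. The real work is the removal of the coprimality condition $(n,r)=1$, the ``elaborate gyrations'' alluded to in the text.

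I would proceed by M\"{o}bius inversion on $\mz[\omega]$:
\[
h(r,s) = \sum_{\substack{d \mid r \\ d \equiv 1 \shortmod{3}}} \mu_\omega(d)\, \lambda(d)\, N(d)^{-s} \sum_{m \equiv 1 \shortmod{3}} \frac{\lambda(m)\, g(r, dm)}{N(m)^s}.
\]
For each divisor $d$, a second M\"{o}bius step restricts the inner sum to $(m,d) = 1$, and then \eqref{eq:gtwisted} yields $g(r, dm) = g(mr, d)\, g(r, m)$. Since $d \mid r$, \eqref{eq:gmult} shows that $g(mr,d)$ depends on $m$ only through a Dirichlet character of modulus dividing $d$; moreover \eqref{eq:gissquarefree} and \eqref{eq:gramanujan} pin down the shape of the cube-full part of $r$ at primes of $d$ necessary for this factor to be nonzero, so only $O(N(r)^\varepsilon)$ divisors $d$ actually contribute. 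Each surviving inner sum is a twisted $\psi$-function with parameter $r' = r/d$ (up to units), to which Lemma \ref{lemma:psi} applies directly.

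The growth bound then follows from summing $O(N(r)^\varepsilon)$ pieces, using $N(r') \le N(r)$ and that the $N(d)^{O(1)}$ prefactors coming from the explicit evaluation of $g(mr,d)$ are swallowed into $N(r)^\varepsilon$. The main obstacle---and the step where I expect the bulk of the work---is the residue estimate at $s = 4/3$: one must identify the divisor $d$ giving the dominant residue and verify that the squarefree non-cube-full part $r'_1$ of the resulting $r'$ is comparable to $r_1$, so that the bound $N(r'_1)^{-1/6+\varepsilon}$ transfers to $N(r_1)^{-1/6+\varepsilon}$. In practice the dominant divisor should be the one supported on the cube-full primes of $r$, giving $r' = r_1$ times units; the careful bookkeeping of the Gauss-sum prefactors and the verification that no destructive cancellation between divisor terms weakens the residue bound is the heart of the argument.
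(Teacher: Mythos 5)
Your overall strategy --- M\"obius inversion to strip the condition $(n,r)=1$, the multiplicativity relations \eqref{eq:gmult}--\eqref{eq:gissquarefree} to factor the Gauss sums, and Patterson's work to absorb the fixed ray class character $\lambda$ --- is the same as the paper's, which packages the reduction into the four identities of Lemma \ref{lemma:laundrylist}. But there is a genuine gap at the central step. After writing $h(r,s)=\sum_{d\mid r}\mu_{\omega}(d)\lambda(d)N(d)^{-s}\sum_m \lambda(m)g(r,dm)N(m)^{-s}$, you cannot ``restrict the inner sum to $(m,d)=1$'' and then factor $g(r,dm)=g(mr,d)g(r,m)$: the discarded terms with $(m,d)>1$ carry \emph{all} of the content for $d\neq 1$. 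Indeed, for $(m,d)=1$ one has $g(mr,d)=\overline{\leg{m}{d}_3}\,g(r,d)$ by \eqref{eq:gmult}, and since $d\mid r$ the quantity $rx/d$ is integral, so $\check{e}(rx/d)=1$ and $g(r,d)=\sum_{x \shortmod d}\leg{x}{d}_3=0$ for every nonunit squarefree $d\mid r$. Thus the coprime-restricted part of every $d\neq 1$ term vanishes identically, and your reduction degenerates to the false identity $h(r,s)=h_1(r,s)$ plus an uncontrolled remainder. The actual work --- the ``elaborate gyrations'' --- is to evaluate exactly the terms where $n$ shares prime factors with $r$: writing $n=\pi^{j}n'$ and invoking \eqref{eq:gramanujan} shows that at primes of the square part $r_2$ only $j=3$ survives, producing the Euler factors $(1-\lambda(\pi)^{3}N(\pi)^{2-3s})^{-1}$ of \eqref{eq:3.6.3}, while at primes of the squarefree part $r_1$ one needs the generalization \eqref{eq:3.6.4} of Lemma 3(i) of \cite{H-BP}, with its explicit coefficients $\mu_{\omega}(a)N(a)^{1-2s}\lambda(a)^{2}\overline{g(r_1r_2^2/a,a)}$. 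None of this appears in your sketch.

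A related inaccuracy: the surviving pieces are not ``$\psi$-functions with parameter $r'=r/d$.'' In the correct chain the cube part is removed for free (\eqref{eq:3.6.2}), but the step \eqref{eq:3.6.1} that then re-removes the residual coprimality condition shifts the Gauss-sum parameter \emph{upward}, $r\mapsto ar$, while \eqref{eq:3.6.4} shifts it downward with nontrivial prefactors; likewise the twist $\overline{\leg{m}{d}_3}$ is not a character of bounded conductor to be absorbed into $\lambda$ but must be folded back into the Gauss sum via \eqref{eq:gmult}, again changing the parameter. Both directions of shift, and the accompanying $N(a)^{1-2s}\overline{g(\cdot,a)}$ factors, have to be tracked to recover the growth bound $N(r)^{(\sigma_1-\sigma)/2}$ and especially the residue bound $N(r_1)^{-1/6+\varepsilon}$, since it is the squarefree non-square part of the \emph{shifted} parameter that enters Lemma \ref{lemma:psi}. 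You correctly flag this bookkeeping as the heart of the residue estimate, but the decomposition you set up is not the one on which it can be carried out.
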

Before proving Lemma \ref{lemma:h} we show how it implies Lemma \ref{lemma:Hbound}.
  We move the line of integration in \eqref{eq:HlX} to $\text{Re}(s) = \frac12 + \varepsilon$, crossing a pole at $s =5/6$, which contributes
\begin{equation*}
\ll X^{5/6} N(l_1)^{-1/6 + \varepsilon}.
\end{equation*}
The main contribution comes from the new line of integration, which gives
\begin{equation*}
\ll  X^{1/2+\varepsilon} N(l_1)^{1/4}.
\end{equation*}
This completes the proof of Lemma \ref{lemma:Hbound}.
\end{proof}

\begin{proof}[Proof of Lemma \ref{lemma:h}]
We shall use inclusion-exclusion type arguments to reduce the estimation of $h(r,s)$ to that of $\psi(r,s)$.  To this end, we collect some of these results with the following
\begin{mylemma}
\label{lemma:laundrylist}
 Suppose $f$, $\alpha$ are squarefree and $(r,f) = 1$, and set
\begin{equation*}
 h(r,f,s) = \sum_{(n,rf) = 1} \frac{\lambda(n) g(r,n)}{N(n)^s}, \quad h_{\alpha}(r,s) = \sum_{(n,\alpha) =1} \frac{\lambda(n) g(r,n)}{N(n)^s}.
\end{equation*}
Furthermore suppose $r = r_1 r_2^2 r_3^3$ where $r_1 r_2$ is squarefree, and let $r_3^*$ be the product of primes dividing $r_3$.
Then
\begin{align}
\label{eq:3.6.1}
 &h(r,f,s) = \sum_{a | f} \frac{\mu_{\omega}(a) \lambda(a) g(r,a)}{N(a)^{s}} h(ar,s), \\
\label{eq:3.6.2}
&h(r_1 r_2^2 r_3^3, s) = h(r_1r_2^2, r_3^*,s), \\
\label{eq:3.6.3}
&h(r_1 r_2^2, s) = \prod_{\pi | r_2} (1 - \lambda(\pi)^3 N(\pi)^{2-3s})^{-1} 
h_{r_1}(r_1 r_2^2, s),
\end{align}
\begin{equation}
\label{eq:3.6.4}
h_{r_1}(r_1 r_2^2, s) = \prod_{\pi | r_1}(1 -\lambda(\pi)^3 N(\pi)^{2-3s})^{-1} \sum_{a | r_1} \mu_{\omega}(a) N(a)^{1-2s} \lambda(a)^2 \overline{g(r_1 r_2^2/a, a)} h_1(r_1r_2^2/a, s).
\end{equation}
\end{mylemma}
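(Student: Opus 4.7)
My plan is to prove each of the four identities of Lemma \ref{lemma:laundrylist} by isolating, prime by prime, the local Dirichlet factor at primes dividing the relevant parts of $r$, using the multiplicativity of the Gauss sum \eqref{eq:gtwisted}, the character-twist formula \eqref{eq:gmult}, the vanishing \eqref{eq:gissquarefree}, and the explicit values \eqref{eq:gramanujan}. The ray-class character $\lambda$ is completely multiplicative and plays a passive role, and the defining series converge absolutely for $\text{Re}(s)$ large, so all rearrangements are legitimate in the region where the identities are initially established; the statements then extend to their natural domains by analytic continuation.

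For \eqref{eq:3.6.1}, I would start from $\mathbf{1}[(n,f)=1] = \sum_{a \mid (n,f)} \mu_\omega(a)$, swap the order of summation, and write $n = am$. Since $g(r,n)=0$ unless $n$ is squarefree, only squarefree $a,m$ with $(a,m)=1$ contribute; then \eqref{eq:gtwisted} gives $g(r,am)=\overline{\leg{a}{m}_3}\,g(r,a)\,g(r,m)$, and \eqref{eq:gmult} absorbs the twist as $\overline{\leg{a}{m}_3}\,g(r,m)=g(ar,m)$, so the $m$-sum collapses to $h(ar,s)$ and the outer $a$-sum yields the claimed formula. Identity \eqref{eq:3.6.2} is immediate, since both sides have support on $n$ coprime to $r_1 r_2 r_3$, and \eqref{eq:gmult} gives $g(r_1 r_2^2 r_3^3,n) = \overline{\leg{r_3^3}{n}_3}\,g(r_1 r_2^2,n) = g(r_1 r_2^2,n)$ because cubes are trivial under the cubic residue symbol.

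For \eqref{eq:3.6.3}, I would split each $n$ according to its $r_2$-part and compute the local Euler factor at every $\pi\mid r_2$. Writing $r_1 r_2^2 = \pi^2 s$ with $(s,\pi)=1$, the combination of \eqref{eq:gmult} and \eqref{eq:gramanujan} leaves only $k=0$ and $k=3$ in the local sum $\sum_{k\ge 0}\lambda(\pi)^k g(r_1 r_2^2,\pi^k) N(\pi)^{-ks}$, and the two terms add to $1 - \lambda(\pi)^3 N(\pi)^{2-3s}$. The twist $\overline{\leg{\pi^k}{n'}_3}$ coming from \eqref{eq:gtwisted} is trivial for $k=3$, so the local factors at primes of $r_2$ decouple from the rest of the sum and the stated product formula drops out.

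The substantive obstacle is \eqref{eq:3.6.4}, where the local analysis at $\pi\mid r_1$ is more delicate because $\pi^2 \nmid r_1 r_2^2$. An explicit calculation of $g(\pi,\pi^k)$ (writing representatives in the form $x = u + v\pi^{k-1}$ and collapsing the inner $v$-sum as a complete additive character sum over $\mz[\omega]/\pi$) shows that $g(\pi,\pi^k)=0$ for $k=1$ and for all $k\ge 3$, while $g(\pi,\pi^2) = N(\pi)\overline{g(\pi)}$. Splitting each $n$ into its $\pi$-part and applying \eqref{eq:gtwisted} and \eqref{eq:gmult} produces a pair of linear relations linking $h_1(r,s)$ to $h_\pi(r,s)$ and $h_1(r/\pi,s)$ to $h_\pi(r/\pi,s)$. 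Eliminating $h_\pi(r/\pi,s)$ using $|g(\pi)|^2 = N(\pi)$ (from \eqref{eq:gcubed}) introduces an extra copy of $\lambda(\pi)^3 N(\pi)^{2-3s}\,h_\pi(r,s)$ on the right, which upon transposition yields the single-prime version
\begin{equation*}
(1-\lambda(\pi)^3 N(\pi)^{2-3s})\,h_\pi(r,s) \;=\; h_1(r,s) \;-\; \lambda(\pi)^2 N(\pi)^{1-2s}\,\overline{g(r/\pi,\pi)}\,h_1(r/\pi,s).
\end{equation*}
Iterating at every prime of $r_1$, by induction on the number of prime factors, produces the M\"{o}bius sum $\sum_{a \mid r_1}$ with sign $\mu_\omega(a)$ and cumulative weight $\overline{g(r_1 r_2^2/a,a)}$, where \eqref{eq:gtwisted} is invoked repeatedly to merge the single-prime Gauss-sum weights into the multiplicative form, and the product $\prod_{\pi\mid r_1}(1-\lambda(\pi)^3 N(\pi)^{2-3s})^{-1}$ collects the accumulated denominators. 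The main difficulty is precisely this inductive bookkeeping of the Gauss-sum weights, for which the twist identity \eqref{eq:gmult} is essential.
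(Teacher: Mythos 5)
Your proposal is correct, and for the first three identities it follows essentially the same route as the paper: \eqref{eq:3.6.1} via M\"obius, the vanishing \eqref{eq:gissquarefree} to force $(a,m)=1$, and the twisted multiplicativity \eqref{eq:gtwisted}--\eqref{eq:gmult}; \eqref{eq:3.6.2} from the triviality of $\leg{\pi^3}{n}_3$; and \eqref{eq:3.6.3} from the fact that only $k=0,3$ survive in the local sum at $\pi \mid r_2$ together with the cube-triviality of the twist (the paper organizes this as a subtract-and-rearrange recursion in an auxiliary function $h_2$, you as a direct local factorization --- the computation is the same). One small imprecision: in \eqref{eq:3.6.1} the blanket claim that $g(r,n)=0$ unless $n$ is squarefree is false in general (e.g.\ $g(\pi^2,\pi^3)\neq 0$); what you need, and what does hold here, is \eqref{eq:gissquarefree} combined with $(a,r)=1$ (which follows from $a\mid f$ and $(r,f)=1$). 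The genuine divergence is \eqref{eq:3.6.4}: the paper does not prove it at all, but simply observes it is Lemma 3(i) of Heath-Brown--Patterson with a completely multiplicative twist by $\lambda$ inserted, and asserts the argument there generalizes. You instead give a self-contained derivation, and your key ingredients check out: $g(\pi,\pi)=0$, $g(\pi,\pi^2)=N(\pi)\overline{g(\pi)}$, $g(\pi,\pi^k)=0$ for $k\ge 3$, the two linear relations coupling $h_1(r,s),h_\pi(r,s)$ with $h_1(r/\pi,s),h_\pi(r/\pi,s)$, and the elimination using $|g(r/\pi,\pi)|^2=N(\pi)$ reproduce exactly the single-prime case of \eqref{eq:3.6.4}. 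This buys independence from the reference at the cost of the inductive bookkeeping over the primes of $r_1$, which you acknowledge but do not carry out; that step is routine but is where the cumulative weight $\overline{g(r_1r_2^2/a,a)}$ must be assembled via \eqref{eq:gtwisted}, so it would need to be written down carefully in a full proof.
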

Before embarking on the technical details of this proof, we show how it proves Lemma \ref{lemma:h}.  The function $h_1(r, s)$ is identical to $\psi(r,s)$ except it is twisted by $\lambda(n)$, the ray class character of modulus $9$.  Then $h_1$ satisfies the properties of Lemma \ref{lemma:psi}, the necessary generalizations having been carried out in \cite{P2} for example.  By working backwards and using (\ref{eq:3.6.1}-\ref{eq:3.6.4}), we see that $h(r,s)$ has meromorphic continuation and potential pole at $s=4/3$ only, and
\begin{equation*}
 h(r,f,s) \ll N(f)^{\varepsilon} N(r)^{\half(\sigma_1 - \sigma)} (1+t^2)^{\sigma_1 - \sigma}.
\end{equation*}
The analogous bound on $h(r,s)$ follows.  The estimate on the residue follows by a similar method.
\end{proof}

\begin{proof}[Proof of Lemma \ref{lemma:laundrylist}] 
Using M\"{o}bius to remove the condition $(n,f) = 1$ gives
\begin{equation*}
 h(r,f,s) = \sum_{a | f} \frac{\muK(a) \lambda(a)}{N(a)^s} \sum_{(n,r) = 1} \frac{\lambda(n) g(r,an)}{N(n)^s}.
\end{equation*}
Notice that if $\pi |a$ then $\pi \nmid r$ so if in addition $\pi | n$ then by \eqref{eq:gissquarefree}, $g(r,an) = 0$.  Thus we may assume $(n,a) = 1$, in which case $g(r,an) = g(ar,n) g(r,a)$ by \eqref{eq:gtwisted}, and hence \eqref{eq:3.6.1} holds.

From \eqref{eq:gmult} it follows that $g(r_1 r_2^2 r_3^3,n) = g(r_1 r_2^2, n)$ provided $(n, r_3) = 1$, whence \eqref{eq:3.6.2} holds.

Now we prove \eqref{eq:3.6.3}.  For this we introduce some new notation as follows.  Let $ab^2 \in \mz[\omega]$ and let $\pi$ be prime such that $(ab,\pi) = 1$.  Then
\begin{equation*}
 h_2(a\pi^2, b^2,s) := \sum_{(n,a\pi) = 1} \frac{\lambda(n) g(a b^2 \pi^2,n)}{N(n)^s} = \sum_{(n,a) = 1} \frac{\lambda(n) g(a b^2\pi^2,n)}{N(n)^s} - \sum_{(n,a) = 1, \pi | n} \frac{\lambda(n)g(a b^2\pi^2,n)}{N(n)^s}.
\end{equation*}
Writing in the latter sum $n = \pi^j n'$, where $(n', \pi) = 1$, then we have $g(a b^2 \pi^2, \pi^{j} n') = g(\pi^{j+2}a b^2, n') g(ab^2 \pi^2, \pi^{j})$ by \eqref{eq:gtwisted}.  Using \eqref{eq:gmult} we get $g(a b^2\pi^2, \pi^{j}) = \overline{(ab^2/\pi^{j})_3} g(\pi^2, \pi^{j})$, which is nonzero if and only if $j=3$, from \eqref{eq:gramanujan}.  Thus we get $g(a b^2\pi^2, \pi^3 n') = -N(\pi^2) g(a b^2 \pi^2, n')$.  In summary, we have shown
\begin{equation*}
 h_2(s,a\pi^2, b^2) = \sum_{(n,a) = 1} \frac{\lambda(n)g(a b^2 \pi^2,n)}{N(n)^s} + \lambda(\pi)^3 N(\pi)^{2-3s} h_2(s,a\pi^2, b^2),
\end{equation*}
which when rearranged states
\begin{equation*}
 \sum_{(n, a\pi) =1} \frac{\lambda(n) g(a b^2 \pi^2, n)}{N(n)^s} = (1- \lambda(\pi^3) N(\pi)^{2-3s})^{-1} \sum_{(n,a) = 1} \frac{\lambda(n) g(a b^2 \pi^2, n)}{N(n)^s}.
\end{equation*}
An induction argument on the number of prime divisors of $b$ gives
\begin{equation*}
 h(r_1 r_2^2,s) = \prod_{\pi | r_2} (1 - \lambda(\pi)^3 N(\pi)^{2 - 3s})^{-1} \sum_{(n,r_1)=1} \frac{\lambda(n) g(r_1 r_2^2, n)}{N(n)^s},
\end{equation*}
which is the same as \eqref{eq:3.6.3}.
%
Finally, the relation \eqref{eq:3.6.4} 
is a slight generalization of Lemma 3(i) of \cite{H-BP}, the only difference being that the sums in \eqref{eq:3.6.4} are twisted by $\lambda(n)$.  Since $\lambda(n)$ is completely multiplicative, an inspection of the argument of \cite{H-BP} easily shows that the proof generalizes to give \eqref{eq:3.6.4}.
\end{proof}

\section{The cubic large sieve}
In this section we establish our cubic large sieve, Theorem \ref{cubiclargesieve}.
It is easy to reduce the expression in question, namely the left hand side of \eqref{final}, to a sum of similar expressions with the additional summation conditions $(q,3)=1$ and $(m,3)=1$ included. Thus it suffices to estimate
\begin{equation*} \label{trans}
\sum\limits_{\substack{Q<q\le 2Q\\ (q,3)=1}}\ \sumstar\limits_{\substack{\chi \shortmod q\\ \chi^3=\chi_0}} \Big|
\sumstar\limits_{\substack{M<m\le 2M\\ (m,3)=1}} a_m \chi(m)\Big|^2
= 
\sumprime\limits_{\substack{n\in \mz[\omega]\\ Q<N(n)\le 2Q\\ n\equiv 1 \shortmod 3}} \Big|
\sumstar\limits_{\substack{M<m\le 2M\\ (m,3)=1}} a_m\chi_n(m) \Big|^2 =: 
T(Q,M),
\end{equation*}
where the prime indicates that $n$ is squarefree and has no rational prime divisor. 

Throughout this section, we follow the conventions that $n$ denotes an element of $\mz[\omega]$, that $m$ is a rational integer and that the coefficients $a_m$ are supported at integers $m$ coprime to $3$ in the interval $(M,2M]$.
The reader should recall that 
$\chi_n(m)=\left(\frac{m}{n}\right)_3$, defined for any $m,n \in \mz[\omega]$ with $n \equiv  1 \pmod{3}$.  Note that $\chi_n(m) = \chi_m(n)$ for all $m$ and $n$ appearing in the definition of $T(Q,M)$.

The primary goal of this section is to estimate the expression $T(Q,M)$. To this end, we will frequently make use of ideas and results in \cite{Hea0} and \cite{Hea}, where \eqref{realfinal} and \eqref{eq:HBcubic} were established, respectively (in particular, we shall use \eqref{eq:HBcubic} itself). However, here we have to manage the additional difficulty lying in the asymmetry of the sums over $m$ and $n$. This will require some new ideas. In particular, we shall use H\"older's inequality to enlarge the sum over $m$ and {\it two} versions of the Poisson summation formula: the one-dimensional version for the sum over $m\in \mz$ and the two-dimensional version for the sum over $n\in \mz[\omega]$.

\subsection{Definition of certain norms}
\label{section:normdefs}
In the following, we define several norms which we later compare and estimate.
We begin by defining a norm corresponding to $T(Q,M)$ by
\begin{equation} \label{B1def}
B_1(Q,M):=\sup\limits_{(a_m)} ||a_m||^{-2} \sumprime\limits_{\substack{Q<N(n)\le 2Q\\ n\equiv 1 \shortmod 3}} \Big|
\sumstar\limits_{m} a_m \chi_m(n)\Big|^2,
\quad \text{where} \quad
||a_m||^2 = \sum_{m} |a_m|^2,
\end{equation}
and where by convention we suppose that $(a_m)$ is not identically zero.  Note that we used cubic reciprocity for this formulation.  We recall that the prime at the outer sum indicates that $n$ is squarefree and has no rational prime divisor.

We further define a norm $B_2(Q,M)$ in the same way as $B_1(Q,M)$ except removing the condition that $n$ has a rational prime divisor.  Similarly, we define a norm $B_3(Q,M)$ by further removing the condition that $n$ is squarefree.

Let $W:\mr \rightarrow \mr$ be a fixed smooth, nonnegative, compactly-supported function such that $W(x) \geq 1$ for $1 \leq x \leq 2$.
It follows that $B_3(Q,N)$ is bounded by
\begin{equation*}
B_3(Q,N)\le \sup\limits_{(a_m)} ||a_m||^{-2} \sum\limits_{n}
W\left(\frac{N(n)}{Q}\right) \Big|
\sumstar\limits_{m} a_m \chi_m(n)\Big|^2.
\end{equation*}
Expanding the square and rearranging the summation, the right-hand side takes the form  
\begin{equation*} \label{db3bound}
\sup\limits_{(a_m)} ||a_m||^{-2}\sumstar\limits_{m_1, m_2} a_{m_1}\overline{a_{m_2}}
\sum\limits_{n}
W\left(\frac{N(n)}{Q}\right) \chi_{m_1}(n)\overline{\chi_{m_2}}(n).
\end{equation*}
As in \cite{Hea}, it will turn out that we may restrict attention to the case in which $m_1$ and $m_2$ are coprime. We define another norm $B_4$ corresponding to the above sum with the restriction $(m_1,m_2)=1$ included by
\begin{equation*}
B_4(Q,M):=\sup\limits_{(a_m)} ||a_m||^{-2} \sumstar\limits_{(m_1,m_2)=1} a_{m_1}\overline{a_{m_2}}
\sum\limits_{n}  W\left(\frac{N(n)}{Q}\right) \chi_{m_1}(n)\overline{\chi_{m_2}}(n).
\end{equation*}

We further define a norm $C_1(M,Q)$ dual to $B_1(Q,M)$ by
\begin{equation*}
C_1(M,Q):=\sup\limits_{(b_{n})} ||b_n||^{-2} \sumstar\limits_{\substack{M<m\le 2M\\ (m,3)=1}} \Big| \sumprime\limits_{n} b_{n} \chi_n(m)\Big|^2,
\end{equation*}
where here as in the sequel, we assume that the coefficients $b_n$ are supported at elements $n$ of $\mz[\omega]$ with $Q<N(n)\le 2Q$ and $n\equiv  1 \pmod 3$.
We further recall that the star at the outer sum indicates that $m$ is squarefree. By the duality principle, $C_1(M,Q)=B_1(Q,M)$. 

Finally, we define a norm $C_2(M,Q)$ by extending the summation over $m$ in the definition of $C_1(M,Q)$ to all integers $m$ with $M<m\le 2M$.

\subsection{Proof of Theorem \ref{cubiclargesieve}}
\label{section:comparisonC}
We begin by collecting various properties that the norms satisfy.  

\begin{mylemma} \label{normlemma} Let $Q,M\ge 1$ and $C$ be a sufficiently large positive constant. Then we have the following inequalities:
\begin{equation} \label{C2e1}
C_2(M,Q) \ll (QM)^{\varepsilon}\left(M + Q^{5/3}\right);
\end{equation}
\begin{equation} \label{C22}
C_2(M,Q)\ll M^{\varepsilon}Q^{1-1/v}\sum\limits_{j=0}^{v-1}C_2(2^jM^v,Q)^{1/v}, \quad \text{ for each fixed positive integer $v$};
\end{equation}
\begin{equation} \label{B11}
B_1(Q_1,M)\ll B_1(Q_2,M), \quad \text{ if $Q_1,M\ge 1$ and $Q_2\ge CQ_1\log (2Q_1M)$};
\end{equation}
\begin{equation} \label{B21}
B_2(Q,M) \ll (\log{2Q})^3 Q^{1/2}X^{-1/2} B_1(XQ^{\varepsilon},M), \quad \text{for some $X$ with $1\le X\le Q$};
\end{equation}
\begin{equation} \label{B32}
B_3(Q,M) \ll (\log 2Q)^3 Q^{1/2}X^{-1/2} B_2(XQ^{\varepsilon},M), \quad \text{for some $X$ with $1\le X\le Q$};
\end{equation}
\begin{equation} \label{B34}
B_3(Q,M)\ll M^{\varepsilon} B_4\left(\frac{Q}{\Delta_1},\frac{M}{\Delta_2}\right), \quad 
\text{for some $\Delta_1,\Delta_2\in \mathbb{N}$ with $\Delta_2^2\ge \Delta_1$};
\end{equation}
\begin{multline} \label{B43}
B_4(Q,M)\ll Q+QM^{\varepsilon-2} \max\left\{B_3(K,M) : K\le M^4Q^{-1}\right\}
\\
+ Q^{-1}M^{6+\varepsilon} \sum\limits_{K> M^4/Q} K^{-2-\varepsilon} B_3(K,M),
\end{multline}
where the sum over $K$ in \eqref{B43} runs over powers of $2$.
\end{mylemma}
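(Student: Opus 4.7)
The plan is to prove the seven inequalities in Lemma \ref{normlemma} separately. Several rest on standard tools---duality, smooth weights, and M\"obius inversion---while the central technical step is \eqref{B43}, a two-dimensional Poisson summation modelled on Heath-Brown's cubic large sieve.

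For \eqref{C2e1}, I would pass to the dual form of $C_2(M,Q)$ and apply the cubic large sieve \eqref{eq:HBcubic} after viewing each rational $m \in (M,2M]$ as an element of $\mz[\omega]$ of norm at most $4M^2$. For \eqref{C22}, H\"older's inequality in the dual formulation gives $\sum_{n}' |S(n)|^2 \le Q^{1-1/v}\bigl(\sum_n' |S(n)|^{2v}\bigr)^{1/v}$; expanding $S(n)^v = \sum_M c_M \chi_M(n)$ using multiplicativity of the cubic residue symbol in its upper entry, localising $M$ dyadically inside $(M^v, 2^v M^v]$, and absorbing $\sum_M |c_M|^2$ via the standard divisor bound $\ll M^{\varepsilon}\|a\|^{2v}$ yields the claimed recursion.

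Inequalities \eqref{B11}, \eqref{B21}, \eqref{B32}, and \eqref{B34} are bookkeeping. For \eqref{B11}, I would majorise the sharp cutoff on $N(n)$ by a smooth compactly-supported weight. For \eqref{B21} and \eqref{B32}, I would use $\sum_{d|n,\, d \in \mz,\, d\equiv 1(3)}\mu_{\mz}(d)$ and $\mu_\omega$ respectively to drop the ``no rational prime divisor'' and squarefreeness conditions on $n$, following Section \ref{section:M1}; writing $n = dn_0$ (respectively $n = l^2 n_0$) rescales the norm constraint by $d^2$ (respectively $N(l^2)$), and localising the auxiliary divisor dyadically produces the factor $Q^{1/2}X^{-1/2}$. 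For \eqref{B34}, I would decompose by $\Delta_2 = (m_1, m_2)$ and use multiplicativity to pull out the common $\chi_{\Delta_2}(n)\overline{\chi_{\Delta_2}(n)}$ factor (which has modulus one when $(n,\Delta_2)=1$ and vanishes otherwise); the divisibility condition on $N(n)$ then rescales the $n$-range by a $\Delta_1$ with $\Delta_1 \le \Delta_2^2$, giving the parameters $(Q/\Delta_1, M/\Delta_2)$.

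The hardest step, \eqref{B43}, is what the intermediate $B_4$ is designed for. I would expand the square to obtain $\sum_{(m_1,m_2)=1} a_{m_1}\overline{a_{m_2}} \sum_n W(N(n)/Q)\, \chi_{m_1}(n)\overline{\chi_{m_2}(n)}$ and apply Lemma \ref{2dpoisson} to the inner $n$-sum. Because $(m_1, m_2) = 1$, the two-dimensional Poisson formula converts the $n$-sum into a sum over $k \in \mz[\omega]$ of effective length $\asymp N(m_1 m_2)/Q$, weighted by $g(m_1)\overline{g(m_2)}\,\check{w}(\cdot)/N(m_1 m_2)$. The $k=0$ contribution yields the leading $Q$; for $k \neq 0$, the resulting double sum over $k$ and $(m_1, m_2)$ re-assembles into a $B_3(K,M)$-type norm with $K \asymp N(k)N(m_1 m_2)/Q$, and splitting at $K = M^4/Q$ separates the maximum piece from the tail series, the latter tamed by the rapid decay of $\check{w}$. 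The principal obstacle will be the bookkeeping in this Poisson step: tracking the Gauss-sum factors $g(m_i)$ and verifying that the transformed double sum really fits the $B_3$-norm template, a point where Heath-Brown's symmetric framework requires careful adaptation to our asymmetric $m$-versus-$n$ setting.
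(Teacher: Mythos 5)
Your treatments of \eqref{C22}, \eqref{B21}, \eqref{B32}, \eqref{B34} and \eqref{B43} track the paper's arguments closely: H\"older plus the divisor bound in the dual formulation for \eqref{C22}, extraction and dyadic localisation of an auxiliary divisor of $n$ for \eqref{B21}--\eqref{B32}, extraction of $(m_1,m_2)$ for \eqref{B34}, and two-dimensional Poisson summation via Lemma \ref{2dpoisson} followed by Mellin/M\"obius separation and Cauchy--Schwarz for \eqref{B43}. However, your proof of \eqref{C2e1} has a genuine gap, and this is the central estimate of the whole lemma. A direct application of Heath-Brown's cubic large sieve \eqref{eq:HBcubic} to the dual form, viewing each rational $m\in(M,2M]$ as an element of $\mz[\omega]$ of norm $\le 4M^2$, yields only
\begin{equation*}
C_2(M,Q)\ll (QM)^{\varepsilon}\bigl(Q+M^2+(QM^2)^{2/3}\bigr),
\end{equation*}
which is strictly weaker than $(QM)^{\varepsilon}(M+Q^{5/3})$: for $M\asymp Q$ it gives $Q^2$ instead of $Q^{5/3}$, and for large $M$ the term $M^2$ swamps $M$. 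The paper points out exactly this failure in the introduction; the whole reason \eqref{C2e1} is nontrivial is that the rational integers form a \emph{thin} subset of $\mz[\omega]$, so the symmetric bound \eqref{eq:HBcubic} cannot see the true length $M$ of the $m$-sum. (There is also the lesser issue that the dual of $C_2$ involves all integers $m$, not only those squarefree in $\mz[\omega]$, so \eqref{eq:HBcubic} does not even apply verbatim.)

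The missing idea is an intermediate application of \emph{one-dimensional} Poisson summation in $m$. One first expands the square in $\sum_{m}W(m/M)\,|\sum_n' b_n\chi_n(m)|^2$, reduces $\chi_{n_1}\overline{\chi_{n_2}}$ to a primitive character by extracting $\Delta=(n_1,n_2)$ and $\delta=(n_1,\overline{n_2})$, and then applies \eqref{eq:Poisson1dim} to the $m$-sum. The zero frequency contributes the term $M^{1+\varepsilon}\|b\|^2$ (it survives only for $n_1=n_2=\delta=1$), while the nonzero frequencies $h$ run over a range of length about $Q^2/M$ and carry the Gauss sum $\tau(\chi_{n_1}\overline{\chi_{n_2\delta}})$, which is evaluated by Lemma \ref{lemma:gausssumcalculation}. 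Only at this stage is \eqref{eq:HBcubic} (in its Cauchy--Schwarz form \eqref{eq:HBcubic2}) applied, to the resulting bilinear form in $(n_1,n_2)$ for each fixed $h$; summing over $h$ then produces $Q^{5/3}$. Separately, your proposed proof of \eqref{B11} by smoothing the cutoff on $N(n)$ cannot work: a smooth majorant does not relate the norm on the range $N(n)\asymp Q_1$ to the norm on the disjoint range $N(n)\asymp Q_2$. The increasing property is obtained, as in Lemma 9 of \cite{Hea0}, by passing to coefficients supported on multiples of an auxiliary prime of norm about $Q_2/Q_1$ and averaging over such primes to dispose of the coprimality constraint.
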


We postpone the proof of Lemma \ref{normlemma} to the following sections and now deduce Theorem \ref{cubiclargesieve}. Recall that we need to prove that
\begin{equation} \label{whatweneed}
B_1(Q,M)\ll (QM)^{\varepsilon}\min \{Q^{5/3}+M,Q^{4/3}+Q^{1/2}M,Q^{11/9}+Q^{2/3}M, Q+Q^{1/3}M^{5/3}+M^{12/5}\}.
\end{equation}
The first estimate in the minimum follows from \eqref{C2e1} and the trivial bound $B_1(Q,M)=C_1(M,Q)\le C_2(M,Q)$. The second and the third estimate are obtained by combining \eqref{C2e1} and \eqref{C22}, with $v=2,3$, and then using $B_1(Q,M)\le C_2(M,Q)$. 

All that remains is to show the last inequality in \eqref{whatweneed}.  This bound is most relevant for the second moment of cubic Dirichlet $L$-functions, i.e.,  \eqref{secondm}.  For this, we use the relations between the various norms.  Specifically,
 we shall start with the already-established bound
\begin{equation} \label{C2v3}
B_1(Q,M)\ll (QM)^{\varepsilon}\left(Q^{11/9}+Q^{2/3}M\right)
\end{equation}
(third term in the minimum in \eqref{whatweneed}) as an initial estimate, deduce bounds for $B_3$ and $B_4$ from it and then work backwards, obtaining new bounds for $B_3$ and finally $B_1$.
In details, we begin by combining \eqref{B21} with \eqref{C2v3} to get 
\begin{equation*}
 B_2(Q,M) \ll (QM)^{\varepsilon} Q^{1/2} X^{-1/2} (X^{11/9} + X^{2/3} M).
\end{equation*}
 The worst case is $X=Q$ which shows $B_2(Q,M)$ also satisfies \eqref{C2v3}.  Repeating the argument, we have
\begin{equation*}
B_3(Q,M)\ll (QM)^{\varepsilon}(Q^{11/9}+Q^{2/3}M).
\end{equation*}
Combining this with \eqref{B43}, we obtain
\begin{eqnarray*}
B_4(Q,M)&\ll& Q+(QM)^{\varepsilon}QM^{-2} \max\left\{K^{11/9}+K^{2/3}M \ :\ K\le M^4Q^{-1}\right\}\\ & & + (QM)^{\varepsilon}M^{6}Q^{-1} \sum\limits_{K\ge M^4/Q} K^{-2-\varepsilon}(K^{11/9}+K^{2/3}M)\nonumber\\
&\ll& Q+(QM)^{\varepsilon}(Q^{-2/9}M^{26/9}+Q^{1/3}M^{5/3}).\nonumber
\end{eqnarray*}
From this and \eqref{B34}, we deduce that
\begin{equation*}
B_3(Q,M)\ll \frac{Q}{\Delta_1}+(QM)^{\varepsilon}\left(
\left(\frac{Q}{\Delta_1}\right)^{-2/9}  \left(\frac{M}{\Delta_2}\right)^{26/9}+
\left(\frac{Q}{\Delta_1}\right)^{1/3}  \left(\frac{M}{\Delta_2}\right)^{5/3}\right)
\end{equation*}
for some positive integers $\Delta_1$, $\Delta_2$ with $\Delta_2^2\ge \Delta_1$. 
The worst case is $\Delta_2 = \Delta_1 = 1$.
Using this together with the trivial bound $B_1(Q,M)\le B_3(Q,M)$ gives
\begin{equation*} \label{tem1}
B_1(Q,M)\ll Q+(QM)^{\varepsilon}\left(Q^{-2/9}M^{26/9}+Q^{1/3}M^{5/3}\right).
\end{equation*}
This bound can in general be improved by taking $Q$ larger, so we use the increasing property \eqref{B11}
to replace $Q$ by 
$Q^{1+\varepsilon}+M^{11/5}
$
which gives the desired bound
\begin{equation*} \label{Re2}
B_1(Q,M)\ll (QM)^{\varepsilon} \left(Q+Q^{1/3}M^{5/3}+M^{12/5}\right).
\end{equation*}
This completes the proof of Theorem \ref{cubiclargesieve}. $\Box$ \\

We remark that a further cycle in the above process does not lead to an improvement of our result.

For convenience, we enclose a table displaying the estimates for $B_1(Q,M)$ that we get for various ranges. This table should be read as follows. If the fractions $\alpha$ and $\beta$ are the $(n-1)$-th and $n$-th entries, respectively, in the first row, and the term $T$ is the $n$-th entry in the second row, then the estimate $B_1(Q,M)\ll (QM)^{\varepsilon}T$ holds in the range $M^{\alpha}<Q\le M^{\beta}$. 

\begin{center}
\begin{tabular}{|r||c|c|c|c|c|c|c|c|c|} \hline
  \rule[-.1cm]{0cm}{.8cm} Range 
  &\makebox[1cm][c]{$\frac{3}{5}$}&\makebox[1cm][c]{$\frac{6}{7}$}&\makebox[1cm][c]{$\frac{6}{5}$}
  &\makebox[1cm][c]{$\frac{3}{2}$} &\makebox[1cm][c]{$\frac{9}{5}$}&\makebox[1cm][c]{$\frac{108}{55}$}
  &\makebox[1cm][c]{$\frac{11}{5}$}&\makebox[1cm][c]{$\frac{5}{2}$}&\makebox[1cm][c]{$\infty$}\\ \hline
  \rule[-.1cm]{0cm}{.8cm} Bound  & $M$ & $Q^{5/3}$ & $Q^{1/2}M$ & $Q^{4/3}$ & $Q^{2/3}M$ & $Q^{11/9}$ & $M^{12/5}$ & $Q^{1/3} M^{5/3}$ & $Q$  \\\hline
 \end{tabular}
\end{center}

\subsection{Proof of Theorem \ref{secondmoment}}
\label{section:secondmomentproof}
Before we turn to the proof of Lemma \ref{normlemma}, we establish Theorem \ref{secondmoment} which is in fact an easy consequence of Theorem \ref{cubiclargesieve}. 
Since all steps are standard, we will only sketch the arguments.

We first establish \eqref{secondm}.  
Using \eqref{approxfunc}, the approximate functional equation, with $A=B=\sqrt{q}$ and $\alpha=it$, and Cauchy's inequality, we estimate the second moment in question by
\begin{equation*}
\sum\limits_{\substack{q\le Q}}  \; \sumstar\limits_{\substack{\chi \shortmod q\\ \chi^3=\chi_0}} \left| L(1/2+it,\chi) \right|^2 \leq 2 \sum\limits_{\substack{q\le Q}} \; \sumstar\limits_{\substack{\chi \shortmod q\\ \chi^3=\chi_0}} \left| \sum_{m=1}^{\infty} \frac{\chi(m)}{m^{\tfrac12 + it}} V_{it}\left(\frac{m}{\sqrt{q}}\right) 
\right|^2.
\end{equation*}
The analytic conductor of $L(1/2 + it, \chi)$ is $\asymp q (1 + |t|)$ so that by Proposition 5.4 of \cite{IK}, $V_{it}(x) \ll_R (1+ x(1+|t|)^{-1/2})^{-R}$ for any $R > 0$.  Thus we may truncate $m$ so that $m \leq M:=(Q(1+|t|))^{1/2 + \varepsilon}$ with a negligibly small error.

Then we break the summations over $q$ and $m$ into dyadic intervals and remove the weight $V_{it}$ using the Mellin transform. We further write $m=d^2n$, where $n$ is squarefree, and use the Cauchy-Schwarz inequality again. Eventually, we arrive at sums of the form
\begin{equation*} \label{end}
\sum\limits_{d\le \sqrt{2M}} \frac{1}{d} \sum\limits_{\substack{Q<q\le 2 Q}}\ \sumstar\limits_{\substack{\chi \shortmod q\\ \chi^3=\chi_0}} \left| \sumstar_{M/d^2<m\le 2M/d^2} \frac{\chi(m)}{m^{\tfrac12 + it}} \right|^2
\end{equation*}
which we then estimate by using Theorem \ref{cubiclargesieve}.  More precisely, we use
\eqref{Deltabound} with the last term, $Q+Q^{1/3}M^{5/3}+M^{12/5}$, in the minimum.  Plugging this bound in and summing trivially over $d$ gives \eqref{secondm}.

Next we establish \eqref{eq:HeckeL}.  For $m$ squarefree, the character $\psi_m(n) = \leg{m}{n}_3$ defined on $n \equiv 1 \pmod{3}$ is primitive with conductor $\mathfrak{f}$ satisfying $m/(3,m) | \mathfrak{f}$, $\mathfrak{f} | 9m$.  Thus the Hecke $L$-function $L(s, \psi_m)$, viewed as a degree $2$ $L$-function over $\mq$, has conductor $\ll N(m) (1+ t^2) = m^2(1+t^2)$.  A variant on the above argument reduces the problem of estimating \eqref{eq:secondmomentHecke} to bounding
\begin{equation*}
 \sumstar_{m \leq M} \Big|\sumstar_{N(n) \ll Q} \frac{\chi_n(m)}{N(n)^{\half + it}} \Big|^2,
\end{equation*}
where $Q \ll  (M(1+|t|))^{1+\varepsilon}$.  The bound $C_1(M,Q) \ll (QM)^{\varepsilon}(Q^{4/3} + Q^{1/2} M)$ from Theorem \ref{cubiclargesieve} then gives the desired estimate.  In the course of the proof of Theorem \ref{thm:mainLvalueresult} we actually require the following variant
\begin{equation}
\label{eq:Heckevariant}
 \sum_{m \leq M} \frac{1}{\sqrt{m}} |L(1/2 + it, \psi_m)| \ll M^{3/4 + \varepsilon} (1+|t|)^{2/3 + \varepsilon}.
\end{equation}
To prove this version, we factor $m$ as $m_1 m_2^2 m_3^3$ where $(m_1, m_2) = 1$.  Then $\psi_m$ equals $\psi_{m_1} \overline{\psi_{m_2}}$ times a principal character.  For each fixed $m_2$, we then generalize \eqref{eq:HeckeL} to give
\begin{equation*}
 \sumstar_{m_1 \leq M_1, (m_1, m_2)=1} |L(1/2 + it, \psi_{m_1} \overline{\psi_{m_2}}|^2 \ll M_1^{3/2+\varepsilon} m_2^{4/3+\varepsilon} (1+|t|)^{2/3+\varepsilon}.
\end{equation*}
With this bound and a use of Cauchy's inequality, it is easy to sum over $m_2$ trivially, giving \eqref{eq:Heckevariant}.

\subsection{Proof of Lemma \ref{normlemma}, estimate \eqref{C2e1}}
\label{section:C2}
The key point in establishing our cubic large sieve is the estimation of the norm $C_2(M,Q)$, which we do in this subsection. We point out that the ordinary large sieve inequality gives only the weaker bound $C_2(M,Q) \ll M + Q^{2}$.

Recall that $C_2(M,Q)$ is the norm associated to the sum 
\begin{equation*}
\label{eq:C2def}
S(M,Q):=\sum\limits_{M<m\le 2M} \Big| \sumprime\limits_{n}
 b_{n} \chi_n(m)\Big|^2,
\end{equation*}
where the prime indicates that $n$ is squarefree and has no rational prime divisor.
The sum $S(M,Q)$ is obviously bounded by
\begin{equation*}
\label{eq:C2defW}
S(M,Q)\leq S_W(M,Q):=\sum\limits_{m\in \mz} W\left(\frac{m}{M}\right) \Big| \sumprime\limits_{n} b_{n} \chi_n(m)\Big|^2 ,
\end{equation*}
where the weight function $W$ is defined as in Section \ref{section:normdefs} 
Expanding the square and rearranging the summation, we get
\begin{equation*}
S_W(M,Q)=\sumprime \limits_{n_1,n_2}
b_{n_1}\overline{b_{n_2}}
\sum\limits_{m\in \mz}
W\left(\frac{m}{M}\right) \chi_{n_1}\overline{\chi_{n_2}}(m).
\end{equation*}

Now the idea is to use the Poisson summation formula to transform the inner sum over $m\in \mathbb{Z}$. This will eventually lead us to an expression that can be bounded by directly using Heath-Brown's cubic large sieve inequality \eqref{eq:HBcubic}. However, before applying Poisson summation, it will be convenient to reduce our characters $\chi_{n_1}\overline{\chi_{n_2}}(m)$ to primitive characters. To achieve this, we need to extract the greatest common divisor $\Delta$ of $n_1$ and $n_2$ as well as the greatest common divisor $\delta$ of $n_1$ and $\overline{n_2}$. Extracting $\Delta$, we get
\begin{equation*}
S_W(M,Q)=
\sumprime_{\substack{\Delta,n_1,n_2 \\ (n_1, n_2) = 1}}
b_{n_1 \Delta}\overline{b_{n_2 \Delta}}
\sum\limits_{(m, N(\Delta)) = 1}
W\left(\frac{m}{M}\right) \chi_{n_1}\overline{\chi_{n_2}}(m).
\end{equation*}
Next, we extract the greatest common divisor $\delta$ of
$n_1$ and $\overline{n_2}$, changing variables via $n_1 \rightarrow \delta n_1$, $n_2 \rightarrow \overline{\delta} n_2$.  The coprimality conditions become $(n_1, \overline{n_2}) = 1$ and $(\delta n_1, \overline{\delta} n_2) = 1$.  From these conditions, combined with the facts that $n_1 \delta$ and $n_2 \overline{\delta}$ are squarefree and have no rational prime divisor, we see that $(N(n_1), N(n_2 \delta)) = 1$.  Thus
\begin{equation*}
\label{eq:prePoisson}
S_W(M,Q)=
\sumprime_{\substack{\Delta,\delta,n_1,n_2 
\\ (N(n_1), N(n_2 \delta)) = 1 }}
b_{n_1 \Delta \delta}\overline{b_{n_2 \Delta \overline{\delta}}}
\sum\limits_{(m, N(\Delta)) = 1}
W\left(\frac{m}{M}\right) \chi_{n_1}\overline{\chi_{n_2 \delta}}(m),
\end{equation*}
where we use that $\chi_{\delta}\overline{\chi_{\overline{\delta}}}=\chi_{\delta}^2=
\overline{\chi_{\delta}}$.  
We still need to remove the coprimality condition in the sum over $m$ before we can apply Poisson summation. Doing this by using the M\"obius function, we get
\begin{equation}
\label{eq:prePoisson2}
S_W(M,Q)=
\sumprime_{\substack{\Delta,\delta,n_1,n_2 \\ (N(n_1), N(n_2 \delta) = 1 }}
b_{n_1 \Delta \delta}\overline{b_{n_2 \Delta \overline{\delta}}}
\sum_{l | N(\Delta)} \mu(l) \chi_{n_1}\overline{\chi_{n_2 \delta}}(l)  \sum\limits_{m}
W\left(\frac{m}{M/l}\right) \chi_{n_1}\overline{\chi_{n_2 \delta}}(m).
\end{equation}

Now the characters $\chi_{n_1}\overline{\chi_{n_2 \delta}}(m)$ in the above expression are primitive, and we have a smooth sum over $m$. Applying the Poisson summation formula in the form given in \eqref{eq:Poisson1dim}, 
we have
\begin{equation}\label{afterpoisson}
\sum\limits_{m \in \mz}
W\left(\frac{m}{M/l}\right) \chi_{n_1}\overline{\chi_{n_2 \delta}}(m) =
\frac{M \tau(\chi_{n_1} \overline{\chi_{n_2 \delta}})}{l N(n_1 n_2 \delta)}  \sum\limits_{h \in \mz} \overline{\chi_{n_1}}\chi_{n_2\delta}(h) 
\widehat{W}\left(\frac{hM}{lN(n_1 n_2 \delta)}\right).
\end{equation}
When $h=0$, then the summand above is zero unless $n_1 = n_2 = \delta =1$. Hence, the contribution of $h=0$ to $S_W(M,Q)$, say $S_0(M,Q)$ satisfies
\begin{equation*} \label{h0cont}
S_0(M,Q) \ll M^{1+\varepsilon} \sumprime_{\Delta}
|b_{\Delta}|^2 \ll M^{1+\varepsilon}||b||^2.
\end{equation*}

Let $S'_{W}(M,Q)$ be the contribution to $S_W(M,Q)$ from $h \neq 0$.  We analyze $S'$ now, where we need to show
\begin{equation}
\label{eq:SW'bound}
S'_{W}(M,Q) \ll Q^{5/3} (QM)^{\varepsilon} \sum_n |b_n|^2.
\end{equation}
Our strategy is to apply Heath-Brown's cubic large sieve estimate \eqref{eq:HBcubic}.  We need the following easy consequence of \eqref{eq:HBcubic}: Suppose that $d_n, d_n'$ are arbitrary complex numbers supported on squarefree $n \in \mz[\omega]$, $n \equiv 1 \pmod{3}$, with $N(n) \leq X$.  Then by Cauchy's inequality,
\begin{equation}
\label{eq:HBcubic2}
|\sum_{m, n} d_m d_n' \leg{m}{n}_3| \leq (\sum_m |d_m|^2)^{1/2} (\sum_{m} | \sum_n d_n' \leg{n}{m}_3|^2)^{1/2} \ll X^{2/3 + \varepsilon} ||d_m|| \cdot ||d_n'||.
\end{equation}

\begin{proof}[Proof of \eqref{eq:SW'bound}]
First observe that we
may freely truncate the sum over $h$ for
\begin{equation*}
|h| \le \frac{Q^2 l}{N(\delta)N(\Delta)^2M} (QM)^{\varepsilon}=:H,
\end{equation*}
since $\widehat{W}$ has rapid decay.  Precisely, if we let $S_W'(M,Q) = S_W''(M,Q)+E$ where $S_W''(M,Q)$ is the contribution to $S_W'(M,Q)$ from $0<|h|\leq H$, then $E \ll (MQ)^{-100} ||b||^2$.
Further, we note that the relevant range for $n_1,n_2$ is $N(n_1),N(n_2) \asymp Q/N(\Delta\delta)$ since the coefficients $b_n$ are supported at $n\in \mz[\omega]$ with $N(n) \asymp Q$.  
Combining \eqref{eq:prePoisson2}, \eqref{afterpoisson}, using Lemma \ref{lemma:gausssumcalculation}, and changing variables $n_1 \rightarrow \overline{n_1}$, we arrive at the following bound
\begin{equation} \label{hnot0}
S_W''(M,Q) \ll 
M \sumprime_{\Delta} \sumprime_{\delta} \frac{1}{N(\delta)^{1/2}} \sum_{l | \Delta} \frac{1}{l} \sum_{0<|h|\le H} \left| U(\Delta,\delta,l,h) \right|,
\end{equation}
where 
\begin{equation*}
U(\Delta,\delta,l,h):=\sumprime\limits_{\substack{N(n_1),N(n_2) \asymp Q/N(\Delta\delta)\\ (N(n_1), N(n_2)) = 1 }} \widehat{W}\left(\frac{hM}{lN(n_1 n_2 \delta)}\right) c_{\Delta,\delta,l,h}(n_1) c_{\Delta,\delta,l,h}'(n_2) \left(\frac{n_1}{n_2}\right)_3,
\end{equation*}
and the coefficients $c,c'$ satisfy the bounds
\begin{equation*}
c_{\Delta,\delta,l,h}(n)\ll \left(\frac{N(\delta\Delta)}{Q}\right)^{1/2} |b_{\overline{n}\Delta\delta}|, \quad c_{\Delta,\delta,l,h}'(n) \ll  \left(\frac{N(\delta\Delta)}{Q}\right)^{1/2} |b_{n\Delta\overline{\delta}}|.
\end{equation*}

Now we are almost ready to use Heath-Brown's cubic large sieve inequality in the form \eqref{eq:HBcubic2} to bound the sum $U(\Delta,\delta,l,h)$.  The only obstacle is that the variables $n_1$ and $n_2$ are not separated due to the coprimality condition $(N(n_1),N(n_2))=1$ and the weight function $\widehat W$.  This is only a technical obstacle since one can use M\"{o}bius inversion to remove the coprimality condition, and the Mellin inversion formula to remove the weight function, both at essentially no cost.  Hence
\begin{equation*}
U(\Delta,\delta,l,h) \ll (QM)^{\varepsilon}\left(\frac{N(\delta \Delta)}{Q}\right)^{1/3}
\sumprime_{n} |b_{n}|^2.
\end{equation*}
Inserting this into \eqref{hnot0} and summing trivially over all the other variables gives \eqref{eq:SW'bound}.
\end{proof}

\subsection{Proof of Lemma \ref{normlemma}, estimate \eqref{C22}}
To prove the self-referential estimate \eqref{C22} for $C_2(M,Q)$, we introduce a dual norm
\begin{equation} \label{C2dual}
C_2'(Q,M):=\sup\limits_{(a_m)} ||a_m||^{-2} \sumprime\limits_{\substack{Q<N(n)\le 2Q\\ n\equiv  1 \shortmod 3}} \Big|
\sum\limits_{m} a_m \chi_n(m) \Big|^2.
\end{equation}
By the duality principle, we have $C_2'(Q,M)=C_2(M,Q)$.
Assume $(a_{m})$ is a sequence such that the supremum in \eqref{C2dual} is attained. 
Then, by H\"older's inequality and multiplicativity of the residue symbol, we get
\begin{equation*} \label{hol}
C_2'(Q,M) \ll  ||a_m||^{-2}  Q^{1-1/v}
\Big(\sumprime\limits_{\substack{Q<N(n)\le 2Q\\ n \equiv \pm 1 \shortmod 3}} \Big|
\sum\limits_{\substack{M^v<m\le (2M)^v}} c_m \chi_n(m) \Big|^{2}
\Big)^{1/v},
\end{equation*}
where
\begin{equation*}
c_m=\sum\limits_{m_1\cdots m_v=m} a_{m_1}\cdots a_{m_v}.
\end{equation*}
By splitting the sum over $m$ into dyadic segments, we have
\begin{equation} \label{normbound}
C_2'(Q,M) \ll Q^{1-1/v}  \sum\limits_{j=0}^{v-1} ||a_m||^{-2} \Big(\sum\limits_{\substack{2^jM^v<m\le 2^{j+1}M^v}}
 |c_m|^2\Big)^{1/v} C_2'(Q,2^jM^v)^{1/v}.
\end{equation}
Using the Cauchy-Schwarz inequality and the well-known bound $d_v(m) \ll  m^{\varepsilon}$ for the divisor function of order $v$, 
we obtain
\begin{equation*}
\sum\limits_{m} |c_m|^2
\ll M^{\varepsilon}
\sum\limits_{m}\
\sum\limits_{m_1\cdots m_v=m} |a_{m_1}\cdots a_{m_v}|^2 =
M^{\varepsilon} \left(
\sum\limits_{m} |a_m|^2\right)^v.
\end{equation*}
Combining this with \eqref{normbound} proves \eqref{C22}. $\Box$

\subsection{Proof of Lemma \ref{normlemma}, estimates \eqref{B11}-\eqref{B43}}
In this section, we establish the remaining estimates \eqref{B11}-\eqref{B43} in Lemma \ref{normlemma} in which the norms $B_i(Q,M)$ are compared. The estimate \eqref{B11} says that the norm $B_1(Q,M)$ is essentially increasing in $Q$.
It is easy to describe the idea behind the proof: simply take coefficients $a_m$ supported on multiples of a fixed prime $p$.  This extends the size of $Q$ by a factor $N(p)$ without essentially changing the size of the norm $B_1$.  There is a slight technical issue regarding coprimality with $p$ that can be circumvented by averaging over $p$.  The details are essentially the same as in Lemma 9 of \cite{Hea0} and we therefore omit this proof.

Next, we compare $B_1$ and $B_2$. We recall that in the definition \eqref{B1def} of $B_1$, the outer sum ranges over squarefree $n \in \mz[\omega]$ that are not divisible by any rational prime. We further recall that $B_2$ is defined in the same way as $B_1$ with the condition that $n$ is not divisible by any rational prime being removed. Hence, we have the trivial inequality $B_1(Q,M)\le B_2(Q,M)$.
Conversely, we want to prove the estimate \eqref{B21} of $B_2$ in terms of $B_1$.  To reduce the sum over squarefree $n\in \mz[\omega]$ to sums over squarefree $n\in \mz[\omega]$ that are not divisible by any rational prime, we extract rational divisors, getting 
\begin{equation*}
T^*(Q,M) := \sumstar\limits_{\substack{Q<N(n)\le 2Q\\ n\equiv 1 \shortmod 3}} \Big|
\sumstar\limits_{m} a_m \chi_m(n)\Big|^2
\leq \sum\limits_{\substack{|k| \le \sqrt{2Q}\\ k \equiv 1 \shortmod{3}}}\ \sumprime\limits_{\substack{Q/k^2<N(n)\le 2Q/k^2\\ n\equiv  1 \shortmod 3}} \Big|
\sumstar\limits_{m} a_m \chi_m(n)
\Big|^2,
\end{equation*}
Breaking the outer sum over $k$ on the right-hand side into $O(\log 2Q)$ dyadic intervals, we find that
\begin{multline*}
T^*(Q,M) \ll  \log(2Q) \sup\limits_{1\le X\le Q}\ \sumstar\limits_{(Q/X)^{1/2} \le k\le 2(Q/X)^{1/2}}\ \sumprime\limits_{\substack{X/4<N(n)\le 2X\\ n\equiv \pm 1 \shortmod 3}} \Big|
\sumstar\limits_{m} a_m \chi_m(n)
\Big|^2
\\
\ll (\log 2Q) \sup\limits_{1\le X\le Q} Q^{1/2}X^{-1/2} (B_1(X/4,M)+B_1(X/2,M)+B_1(X,M))||a_m||^2.
\end{multline*}
Combining this with the increasing property \eqref{B11} implies \eqref{B21}.
The proof of \eqref{B32} is similar to that of \eqref{B21} so we omit the details.

Finally, we compare $B_3$ and $B_4$. Since the proof of \eqref{B34} is essentially the same as that of Lemma 7 in \cite{Hea}, we omit it.  The idea is simply to extract the greatest common divisor of $m_1$ and $m_2$.
To derive the bound \eqref{B43} of $B_4$ in terms of $B_3$, we apply Lemma \ref{2dpoisson} to the sum corresponding to $B_4$, getting
\begin{multline*} \label{twodimpoisson}
\sumstar\limits_{(m_1,m_2)=1} a_{m_1}\overline{a_{m_2}}
\sum\limits_{n \in \mz[\omega]}  W\left(\frac{N(n)}{Q}\right) \left(\frac{n}{m_1}\right)_3\overline{\left(\frac{n}{m_2}\right)_3}
\\
=
  Q \sum\limits_{k\in \mz[\omega]}\ \sumstar\limits_{(m_1,m_2)=1} b_{m_1}\overline{b_{m_2}} \check{W}\left(\sqrt{\frac{N(k)Q}{(m_1m_2)^2}}\right) \overline{\left(\frac{k}{m_1}\right)_3}\left(\frac{k}{m_2}\right)_3,
\end{multline*}
with $\check{W}$ being a certain weight function of rapid decay and
\begin{equation*}
b_m:=a_m\leg{\sqrt{-3}}{m}_3 \frac{g(m)}{m^2}.
\end{equation*}
Now, similarly as in \cite{Hea}, we separate the variables $m_1$ and $m_2$ using
the Mellin transform of the weight function $\check{W}$, and using M\"{o}bius inversion on the coprimality condition $(m_1,m_2)=1$. We then use the Cauchy-Schwarz inequality, and after a short calculation, 
arrive at the estimate for $M \geq 1$
\begin{equation} \label{B430}
B_4(Q,M) \ll QM^{\varepsilon-2} \max\left\{B_3(K,M) : K\le M^4Q^{-1}\right\}
+ M^{6+\varepsilon}/Q \sum\limits_{K> M^4/Q} K^{-2-\varepsilon} B_3(K,M) 
\end{equation}
where $K$ runs over powers of $2$.  This corresponds to Lemma 8 in \cite{Hea}. We also have the trivial bound $B_4(Q,M) \ll Q$ if $M<1$.
Combining this with \eqref{B430}, we get \eqref{B43}. $\Box$

\end{document}